\documentclass[11pt]{article}
\usepackage{latexsym}
\usepackage{indentfirst}
\usepackage{enumerate}
\usepackage{cite}
\usepackage{amssymb}
\usepackage{amsthm}
\usepackage{amsmath}
\usepackage{amsfonts}
\usepackage{txfonts}
\usepackage{graphicx}
\usepackage{hyperref}
\usepackage{color}

\usepackage[T1]{fontenc}
\usepackage[latin2]{inputenc}
\title{The Cucker-Smale equation: singular communication weight, measure-valued solutions and weak-atomic uniqueness}


\author{Piotr B. Mucha\footnote{{\tt p.mucha@mimuw.edu.pl}} \quad\quad
Jan Peszek\footnote{{\tt j.peszek@mimuw.edu.pl}}\ \footnote{JP was supported by International Ph.D. Projects Programme of Foundation for Polish Science operated within the Innovative Economy Operational Programme 2007-2013 funded by 
the European Regional Development Fund (Ph.D. Programme: Mathematical Methods in Natural Sciences) and partially supported by the Polish NCN grant  PRELUDIUM 2013/09/N/ST1/04113.}\\
.\\
{\it Institute of Applied Mathematics and Mechanics,}\\
{\it University of Warsaw, ul. Banacha 2,}\\
{\it 02-097 Warsaw, Poland}
}



\date{\today}

\renewcommand{\it}{\sl}

\belowdisplayskip=18pt plus 6pt minus 12pt \abovedisplayskip=18pt
plus 6pt minus 12pt
\parskip 4pt plus 1pt
\parindent 0pt

\newcommand{\barint}{
         \rule[.036in]{.12in}{.009in}\kern-.16in
          \displaystyle\int  }
\def\r{{\mathbb{R}}}


\addtolength{\textheight}{2cm} \addtolength{\textwidth}{2cm}
\addtolength{\topmargin}{-1cm}
 \addtolength{\oddsidemargin}{-1cm}

\begin{document}

\newtheorem{theo}{\bf Theorem}[section]
\newtheorem{coro}{\bf Corollary}[section]
\newtheorem{lem}{\bf Lemma}[section]
\newtheorem{rem}{\bf Remark}[section]
\newtheorem{defi}{\bf Definition}[section]
\newtheorem{ex}{\bf Example}[section]
\newtheorem{fact}{\bf Fact}[section]
\newtheorem{prop}{\bf Proposition}[section]
\newtheorem{prob}{\bf Problem}[section]


\newcommand{\ds}{\displaystyle}
\newcommand{\ts}{\textstyle}
\newcommand{\ol}{\overline}
\newcommand{\wt}{\widetilde}
\newcommand{\ck}{{\cal K}}
\newcommand{\ve}{\varepsilon}
\newcommand{\vp}{\varphi}
\newcommand{\pa}{\partial}
\newcommand{\rp}{\mathbb{R}_+}
\newcommand{\hh}{\tilde{h}}
\newcommand{\HH}{\tilde{H}}
\newcommand{\cp}{{\rm cap}^+_M}
\newcommand{\hes}{\nabla^{(2)}}
\newcommand{\nn}{{\cal N}}
\newcommand{\dix}{\nabla_x\cdot}
\newcommand{\dv}{{\rm div}_v}
\newcommand{\di}{{\rm div}}
\newcommand{\pxi}{\partial_{x_i}}
\newcommand{\pmi}{\partial_{m_i}}
\newcommand{\tor}{\mathbb{T}}
\newcommand{\pot}{\mathcal{v}}

\maketitle

\begin{abstract}
The Cucker-Smale flocking model belongs to a wide class of kinetic models that describe a collective motion of interacting particles that exhibit some specific tendency e.g. to aggregate, flock or disperse. 
 The paper examines the kinetic Cucker-Smale equation with a singular communication weight. Given a compactly supported measure as an initial datum we construct a global in time  weak measure-valued solution in 
the space $C_{weak}(0,\infty;\mathcal{M})$. The solution is defined as a mean-field limit of the empirical distributions of particles, which dynamics is governed by the Cucker-Smale particle system. The
 studied communication weight is $\psi(s)=|s|^{-\alpha}$ with $\alpha \in (0,\frac 12)$. This range of singularity
admits sticking of characteristics/trajectories.
The second result concerns the weak--atomic uniqueness property stating that a weak solution initiated by a finite sum of atoms, i.e. Dirac deltas in the form $m_i \delta_{x_i} \otimes
\delta_{v_i}$, preserves its atomic structure. Hence they coincide with unique solutions to the system of ODEs associated with the Cucker-Smale particle system.
\end{abstract}




\section{Introduction}\label{intro}
Flocking, swarming, aggregation - there is a multitude of actual real-life phenomena that from the mathematical point of view can be interpreted as one of these concepts. The mathematical description
 of collective dynamics of self-propelled agents with nonlocal interaction originates from one of the basic equations of the kinetic theory -- Vlasov's equation from 1938. Recently it was noted that such 
models provide a way to describe a wide range of phenomena that involve interacting agents with a tendency to aggregate their certain qualities. 
This approach proved to be useful and the language of aggregation now appears not only in the models of groups of animals but also in the description of seemingly unrelated phenomena such as the emergence 
of common languages in primitive societies, distribution of goods or reaching a consensus among individuals \cite{lang, cons, cons2, goods}.
The main class of kinetic equations associated with aggregation models reads as follows:
\begin{align}\label{k}
\partial_t f + v\cdot\nabla f + \dv[(k*f)f]=0,
\end{align}
where $f=f(x,v,t)$ is usually interpreted as the density of those particles that at time $t$ have position $x$ and velocity $v$. The function $k$ is the kernel of the potential governing the motion of particles.
 It is responsible for the non-local interaction between particles and depending on it the particles may exhibit various tendencies like to flock, aggregate or to disperse. 
The common properties of  kernels $k$ required in most models include Lipschitz continuity and boundedness.
In such case the particle system associated with (\ref{k}) is well posed, the characteristic method can be performed for (\ref{k}) and one can  pass from the particle system 
to the kinetic equation by the mean-field limit procedure. Our goal is to consider singular $k$ that is neither Lipschitz continuous nor bounded and refine the mean-field limit to be applicable in such scenario. 
Singularity of the kernel admits possibility of sticking of characteristics, which causes a loss of backward uniqueness in time for (\ref{k}).
The present paper studies the case of the  Cucker-Smale (CS) flocking model.

In \cite{cuc1} from 2007, Cucker and Smale introduced a model for the flocking of birds associated with the following system of ODEs:
\begin{align}\label{cs}
\left\{
\begin{array}{ccl}
\displaystyle\frac{d}{dt}x_i &=& v_i,\\
\displaystyle\frac{d}{dt}v_i &=& \displaystyle\sum_{j=1}^Nm_j(v_j-v_i)\psi(|x_j-x_i|),
\end{array}
\right.
\end{align}
where $N$ is the number of the particles while $x_i(t)$, $v_i(t)$ and $m_i$ denote the position and velocity of $i$th particle at time $t$ and its mass, respectively. 
Function $\psi:[0,\infty)\to[0,\infty)$ usually referred to as {\it the communication weight} is nonnegative and nonincreasing and can be vaguely interpreted as the perception of particles.
The communication weight plays the crucial role in our investigations. It characterizes mathematical difficulties and determines possible physical interpretations of the studied model.

As $N\to\infty$ the particle system is replaced by the following Vlasov-type equation:
\begin{align}\label{cscont}
\partial_tf+v\cdot\nabla f+{\rm div}_v[F(f)f]=0,\ \ x\in\r^d,\ v\in\r^d,\\
F(f)(x,v,t):=\int_{\r^{2d}}\psi(|y-x|)(w-v)f(y,w,t)dwdy\nonumber,
\end{align}
which can be written as (\ref{k}) with $k(x,v)=v\psi(|x|)$. As mentioned before we are considering (\ref{cscont}) with a singular kernel
\begin{align}\label{psi}
\psi(s)=
\left\{
\begin{array}{ccc}
s^{-\alpha}&{\rm for}& s>0,\\
\infty&{\rm for}& s=0,
\end{array}
\ \ \ \ \ \ \alpha>0.\right.
\end{align}
Before we proceed with a more detailed statement of our goals let us briefly introduce the current state of the art for models of flocking and the motivations behind studying such models with singular kernels. 
The literature on aggregation models associated with Vlasov-type equations of the form (\ref{k}) is rich thus we  mention only a few examples  of the most popular branches of the field. 
Here we find  analysis of time asymptotics (see e.g. \cite{hakalaru}) and pattern formation (see e.g. \cite{hajekaka, top}) or analysis of the models with additional forces that simulate various 
natural factors (see e.g. \cite{car3, dua1} - deterministic forces or \cite{cuc4} - stochastic forces). 
The other variations of the model include forcing particles to avoid collisions (see e.g. \cite{cuc2}) or to aggregate under the leadership of certain individuals (see e.g. \cite{cuc3}). A well rounded analysis 
of a model that includes effects of attraction, repulsion and alignment is presented in \cite{can}. The story of the CS model should probably begin with \cite{vic} by Vicsek {\it et al.},
 where a model of flocking with nonlocal interactions was introduced and it is widely recognized to be up to some degree an inspiration for \cite{cuc1}. Since 2007 the CS model with a regular communication weight
 of the form
\begin{align}\label{cucu}
\psi_{cs}(s)=\frac{K}{(1+s^2)^\frac{\beta}{2}}, \ \ \ \beta\geq 0,\ \ \  K>0
\end{align}
was extensively studied in the directions similar to those of more general aggregation models (i.e. collision avoiding, flocking under leadership, asymptotics and pattern formation as well as additional 
deterministic
 or stochastic forces - see \cite{aha2, halele, park, shen, hahaki, car}). Particularly interesting from our point of view is the case of passage from the particle system (\ref{cs}) to the kinetic equation 
(\ref{cscont}), which in case of the regular communication weight was done for example in \cite{haliu} or \cite{hatad}. For a more general overview of the passage from microscopic to mesoscopic and macroscopic 
descriptions in aggregation models of the form (\ref{k}) we refer to \cite{rec, deg1, deg3}.

In the paper \cite{haliu} from 2009 the authors considered the CS model with the singular weight (\ref{psi}) obtaining asymptotics for the particle system but even the basic question of existence of solutions
 remained open till later years. It turned out that system (\ref{cs}) possesses drastically different qualitative properties depending on whether $\alpha\in(0,1)$ or $\alpha\in[1,\infty)$. More precisely in
 \cite{ahn1} the authors observed that for $\alpha\geq 1$ the trajectories of the particles exhibit a tendency to avoid collisions, which they used to prove conditional existence and uniqueness of smooth
 solutions to the particle system. On the other hand in \cite{jpe} the author proved existence of so called {\it piecewise weak} solutions to the particle system with $\alpha\in(0,1)$ and gave an example of
 solution that
 experienced not only collisions of the trajectories but also sticking (i.e. two different trajectories could start to coincide at some point). This dichotomy is an effect of integrability (or of the lack 
of thereof) of $\psi$ in a neighborhood of $0$. 
It is also the reason why the approach to the CS model should vary depending on $\alpha$. One of the latest contributions to this topic is \cite{carchoha} where the authors showed local in-time well posedness
 for the kinetic equation (\ref{cscont}) with a singular communication weight (\ref{psi}) and with an optional nonlinear dependence on the velocity in the definition of $F(f)$. They also presented a thorough 
analysis of the asymptotics for this model. The other more recent addition is \cite{jps}, where the author proved existence and uniqueness of $W^{1,1}$ strong solutions to the particle system (\ref{cs}) with
a singular weight (\ref{psi}) and $\alpha\in(0,\frac{1}{2})$. 

Our present results are strongly dependent on regularity of solutions to the particle system, hence we assume, after \cite{jps}, that throughout the paper $\alpha\in(0,\frac{1}{2})$. However, let us mention that these results (in particular Proposition \ref{pop}) can be generalized to models of flocking with nonlinear dependence on the velocity in the alignment force term $F(f)$. Such models are considered for example in \cite{carchoha}, where the force term takes the form
\begin{align*}
F_{\beta}(f):=\int_{\r^{2d}}\psi(|x-y|)\nabla_v\phi(v-w)f(y,w,t)dydw
\end{align*}
with $\phi(s)\sim s^\beta$ for $\beta>\frac{3-d}{2}$. In particular $\phi$ with large $\beta$ reduces the impact of the singularity of $\psi$. This approach allows to push the singularity of the communication weight $\psi$ up to $1$. However, this interesting extension is outside of the scope of this paper.

As a final remark we suggest a possible application of this type of mathematical models. The phenomenon of sticking of trajectories gives a possibility of creation of Dirac measure solutions from regular 
distributions. Like in the case of \cite{cafig} for the equations of attraction/repulsion. Here we think about formation of polymers from a solution of monomers. Still this qualitative nature of the CS model 
is an open question. However it would give a nice description of polymerization. Due to the possibility of sticking of trajectories the kinetic structure of the considered model is more complex than in \cite{jahu} 
where the authors considered singular kinetic models that preserved $L^\infty$ norm of the distribution of the particles.

In the model with regular weight its purpose is to suppress the distant interactions between particles. However from the modeling point of view it is often convenient to also amplify the local interactions,
 which was done for example in \cite{mo} by introducing a different nonsymmetric CS-type model known as the Mosch-Tadmor model. Singular communication weight in the CS model can also be viewed as a
 less effective yet easier to analyze way to emphasize the local interactions between particles.


\subsection{Main goal - the CS model with a singular communication weight}
We aim at solving the issue of  well posedness for (\ref{cscont}) with the singular weight (\ref{psi})
and initial data from the class of Radon measures. The goal is twofold:

-- Prove existence and analyze continuous dependence on the initial data. 
The existence is obtained by approximating  measure  solutions to (\ref{cscont}) by solutions to  particle system (\ref{cs}) 
using the mean-field limit, similarly to \cite{haliu}. The key obstacle is a lack of sufficient information about the continuous dependence
for solutions to  particle system (\ref{cs}), thus we are not allowed to apply the standard approach.
To our best knowledge the most that can be assumed is solvability of (\ref{cs})  in the  $W^{1,1}$- class that has been proved in \cite{jps}.
 Therefore  by results from \cite{jps} we restrict our considerations to $\alpha\in(0,\frac{1}{2})$ and modify the mean-field limit 
procedure to that regularity.

-- Prove the  weak-atomic uniqueness property to system (\ref{cscont}). 
It means that any weak solution is unique and corresponds to a solution to the particle system (\ref{cs}) 
provided  it initiates from a finite sum of Dirac's deltas $m_i \delta_{x_i(t)}\otimes \delta_{v_i(t)}$.
 Thus any atomic solution is preserved by kinetic equation (\ref{cscont}), and since it is generated by particle system (\ref{cs}), it is unique. This result is a step in the direction of stability of solutions to \eqref{cscont}. We elaborate further on the difficulties in obtaining stability in Remark \ref{refrem}.

The paper is organized as follows. In section \ref{sec2} we provide  the preliminary definitions and tools required throughout the paper,
in particular we introduce  the weak formulation for (\ref{cscont}).
 In section \ref{sec3} we state the main result along with the overview of the proof. In section \ref{sec4} we present the proof of the existence part, while in section \ref{sec5} we establish 
the weak-atomic uniqueness of the solutions. The paper is closed with Appendix \ref{app} where one can find more  technical/tedious elements of proofs and a simple proof of uniqueness to particle system (\ref{cs}).

\section{Preliminaries and notation}\label{sec2}
In this section we present the basic toolset and  definitions
 of  considered problems. 
 Let $\Omega\subset \r^d$ be an arbitrary domain with $d\in\mathbb{N}$. By $W^{k,p}(\Omega)$ we denote the Sobolev's space of the functions with up to $k$th weak derivative belonging to space $L^p(\Omega)$,
 while by $C(\Omega)$ and $C^1(\Omega)$ we denote the space of continuous and continuously differentiable functions, respectively. Hereinafter, $B((x_0,v_0),R)=B_{x,v}((x_0,v_0),R)$ denotes a ball in $\r^{2d}$
 centered in $(x_0,v_0)$ 
with radius $R$. On the other hand $B_x(x_0,R)$ and $B_v(v_0,R)$ denote balls in $\r^d$ with radius $R$ centered in $x_0$ and $v_0$, respectively. For any positive $a$, by $aB_v(v_0,R)$ we understand a homothetic 
transformation of $B_v(v_0,R)$, i.e., $B_v(v_0a,Ra)$.
Throughout the paper letter $C$ denotes a generic positive constant that may change in the same inequality and usually depends on other constants that are of less importance from the point of view of estimates.

\subsection{Bounded-Lipschitz distance}
The standard tool used in the studies of Vlasov-type models are Wasserstein metrics. They provide  convenient topologies on the space of Radon measures and are often used in the research on CS model. 
The general definition of Wasserstein distances is relatively complex, however Wasserstein-1 (or Kantorovich-Rubinstein) distance is, in the sense 
explained 
in \cite{wasser} p. 26, equivalent to the easily defined bounded-Lipschitz distance. Due to such convenient representation we use only the bounded-Lipschitz distance but it 
is worth to note that in similar cases (e.g. \cite{carchoha}) other Wasserstein metrics can be applied.
\begin{defi}[Bounded-Lipschitz distance]\label{bld}
For any probabilistic measures $\mu$ and $\nu$ we define
\begin{align*}
d(\mu,\nu):=\sup_g\left|\int_\Omega gd\mu -\int_\Omega gd\nu\right|,
\end{align*}
where the supremum is taken over all bounded and Lipschitz continuous functions $g$, such that $\|g\|_\infty\leq 1$ and $Lip(g)\leq 1$.
\end{defi}
In the above definition $\|g\|_\infty$ and $Lip(g)$ represent the $L^\infty$ norm and Lipschitz constant of $g$. We also need to 
distinct between spaces of measures with different topologies i.e. we denote ${\mathcal M}={\mathcal M}(\Omega)=({\mathcal M}, TV)$ as the space of finite Radon measures defined on $\Omega$ with 
{\it total variation} topology and we denote
$({\mathcal M},d)$ as the space of finite Radon measures defined on $\Omega$ with bounded-Lipschitz distance topology. The importance of the space $({\mathcal M},d)$  comes from the prime difference 
between the bounded-Lipschitz distance and the total variation. Namely, for $x_1\neq x_2$, 
$TV(\delta_{x_1}-\delta_{x_2}) = 2$ ,
while
$d(\delta_{x_1},\delta_{x_2})\leq 2|x_1-x_2|$.
In particular,  if $x_n\to x$ in $\Omega$ then $\delta_{x_n}\to\delta_x$ in $d$, which is not the case in $TV$.

In our considerations a crucial role is played by
\begin{align*}
{\mathcal M}_+ := \Big\{\mu\in{\mathcal M}:\  \mu\ {\rm is\ nonnegative}\Big\}
\end{align*}
both with $TV$ and $d(\cdot,\cdot)$ topology. If $\Omega$ is a compact subset of $\r^d$, then ${\mathcal M}$ is isomorphic to $(C_b(\Omega))^*$\footnote{This nice property does not hold in general. 
For example if $\Omega=\r^d$ then $(C_b(\Omega))^*$ is isomorphic to the space of regular bounded finitely additive measures, while $(C_0(\Omega))^*$ is isomorphic to ${\mathcal M}$.}. There is a convenient
 relation between the weak * topology in $(C_b(\Omega))^*$ and the topology generated by the bounded-Lipschitz distance on ${\mathcal M}$ that we present below.

\begin{defi}
We say that a sequence $\{\mu_n\}_{n\in{\mathbb N}}\subset {\mathcal M}$ is tight if for all $\epsilon>0$ there exists a compact $K_\epsilon\subset\subset\Omega$ such that for all $n\in{\mathbb N}$ we have
\begin{align*}
|\mu_n|(\Omega\setminus K_\epsilon)<\epsilon,
\end{align*}
where $|\mu|$ is the total variation measure of $\mu$.
\end{defi}

\begin{prop}\label{narrow}
Suppose that $\Omega$ is compact and let $\{\mu_n\}_{n\in{\mathbb
 N}}$ be a tight sequence in ${\mathcal M}$ and let $\mu\in{\mathcal M}$. Then $\mu_n\to\mu$ weakly * if and only if $d(\mu_n,\mu)\to 0$ and $\{\mu_n\}_{n\in{\mathbb N}}$ is bounded in ${\mathcal M}$.
\end{prop}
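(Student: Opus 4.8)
The plan is to prove the two implications separately, using the compactness of $\Omega$ to control the relevant function spaces. For the easy direction, assume $d(\mu_n,\mu)\to 0$ and that $\{\mu_n\}$ is bounded in ${\mathcal M}$; I want to show $\mu_n\to\mu$ weakly $*$, i.e. $\int_\Omega g\, d\mu_n\to\int_\Omega g\,d\mu$ for every $g\in C_b(\Omega)=C(\Omega)$. First I would reduce to the case of Lipschitz $g$: since $\Omega$ is compact, $g$ is uniformly continuous and bounded, and Lipschitz functions are dense in $C(\Omega)$ in the sup-norm (e.g. by mollification, or by the explicit infimal-convolution approximation $g_\lambda(x)=\inf_y(g(y)+\lambda|x-y|)$). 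Given $\varepsilon>0$ pick a Lipschitz $h$ with $\|g-h\|_\infty<\varepsilon$; then after rescaling $h$ by $1/(\|h\|_\infty+Lip(h))$ so that it becomes an admissible test function in Definition \ref{bld}, the term $\bigl|\int h\,d\mu_n-\int h\,d\mu\bigr|$ is bounded by $(\|h\|_\infty+Lip(h))\,d(\mu_n,\mu)\to 0$, while the two error terms $\bigl|\int(g-h)\,d\mu_n\bigr|$ and $\bigl|\int(g-h)\,d\mu\bigr|$ are bounded by $\varepsilon\cdot\sup_n|\mu_n|(\Omega)$ and $\varepsilon\,|\mu|(\Omega)$ respectively, both finite by the boundedness assumption. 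Letting $n\to\infty$ then $\varepsilon\to 0$ gives the claim.

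For the converse, assume $\mu_n\to\mu$ weakly $*$. Boundedness in ${\mathcal M}$ is immediate from the uniform boundedness principle: each $\mu_n$ is a bounded functional on the Banach space $C(\Omega)$, the evaluations $\sup_n|\langle \mu_n,g\rangle|<\infty$ for every fixed $g$ (being convergent sequences), hence $\sup_n\|\mu_n\|_{{\mathcal M}}<\infty$. The substantial point is $d(\mu_n,\mu)\to 0$, where the difficulty is that the supremum in Definition \ref{bld} ranges over an infinite family of test functions $g$ and weak $*$ convergence only gives convergence for each fixed $g$. Here I would invoke tightness together with the Arzel\`a--Ascoli theorem: the admissible class $\mathcal{G}=\{g:\|g\|_\infty\le 1,\ Lip(g)\le 1\}$ is uniformly bounded and uniformly equicontinuous, hence precompact in $C(\Omega)$ with the sup-norm (using that $\Omega$, being compact, may be taken; more precisely one restricts to the relevant compact $K_\varepsilon$ and estimates the tails). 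Then a standard $\varepsilon/3$ argument closes it: cover $\mathcal{G}$ by finitely many sup-norm balls of radius $\varepsilon$ around functions $g_1,\dots,g_m$; for $n$ large, $\bigl|\int g_k\,d\mu_n-\int g_k\,d\mu\bigr|<\varepsilon$ for all $k$ simultaneously; and for arbitrary $g\in\mathcal{G}$, choosing $g_k$ with $\|g-g_k\|_\infty<\varepsilon$, the difference $\bigl|\int g\,d\mu_n-\int g\,d\mu\bigr|$ is at most $2\varepsilon\sup_n(|\mu_n|(\Omega)+|\mu|(\Omega))$ plus $\varepsilon$, uniformly in $g$, so the supremum over $\mathcal{G}$ tends to $0$.

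The main obstacle is precisely the interchange of the supremum over the test-function class with the limit in $n$; this is exactly where tightness and compactness of $\Omega$ enter, since without them $\mathcal{G}$ fails to be precompact in $C(\Omega)$ (e.g. on $\r^d$ one can translate a fixed Lipschitz bump to infinity), and the equivalence breaks down — which is why the hypotheses of the proposition are stated as they are. One technical wrinkle worth handling carefully: the definition of $d$ is phrased for probability measures, so when applying it to the (signed, non-probability) differences one should either work through the Kantorovich--Rubinstein duality directly as $\sup\{|\langle\mu_n-\mu,g\rangle|:\|g\|_\infty\le1,\ Lip(g)\le1\}$ on all of ${\mathcal M}$, or normalize; either way the Arzel\`a--Ascoli mechanism above is unaffected. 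I would also remark that the boundedness hypothesis in the first implication cannot be dropped, which is consistent with the statement including it explicitly.
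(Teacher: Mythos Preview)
Your proof is correct and follows the standard route (density of Lipschitz functions in $C(\Omega)$ for one direction, Arzel\`a--Ascoli compactness of the unit ball of bounded-Lipschitz test functions plus an $\varepsilon/3$ argument for the other). The paper does not actually give its own argument here: it simply states that the proof is a modification of Theorem~2.7 in \cite{star}, so there is nothing substantive to compare. One minor remark: since $\Omega$ is assumed compact, the tightness hypothesis is automatically satisfied and plays no real role in your argument --- you can simply take $K_\varepsilon=\Omega$ throughout and drop the aside about ``restricting to $K_\varepsilon$ and estimating tails''.
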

\begin{proof}
The proof of Proposition \ref{narrow} is a modification of the proof of Theorem 2.7 from \cite{star}.
\end{proof}

\begin{rem}\rm
Since in our considerations $\Omega$ is compact, our situation is more straightforward than in a general case (e.g. in \cite{star}). In such case the so called weak convergence of measures 
(otherwise known as the narrow convergence) is equivalent to the weak * convergence in $(C_b(\Omega))^*$.
\end{rem}

In our considerations we deal only with nonnegative measures, which equipped with the bounded-Lipschitz distance are a complete metric space.

\begin{prop}\label{compl}
The space $({\mathcal M}_+,d)$ is a complete metric space.
\end{prop}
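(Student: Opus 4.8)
The plan is to verify first that $d$ is a genuine metric on $\mathcal{M}_+$, and then to deduce completeness from the weak-$*$ sequential compactness of norm-bounded subsets of $\mathcal{M}=(C(\Omega))^*$ (available because $\Omega$ is compact, so $C(\Omega)$ is separable), using Proposition \ref{narrow} to transfer weak-$*$ convergence back to convergence in $d$.

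For the metric axioms: $d(\mu,\nu)$ is finite since $|\int_\Omega g\,d\mu-\int_\Omega g\,d\nu|\le\|g\|_\infty\,(\mu(\Omega)+\nu(\Omega))$, and symmetry and the triangle inequality are immediate from the definition as a supremum. For definiteness, the class of admissible test functions contains a rescaled copy of every bounded Lipschitz function, and such functions separate finite Borel measures on a metric space (for a closed set $F$ take $g_k=(1-k\,\mathrm{dist}(\cdot,F))_+\uparrow\mathbf{1}_F$), so $d(\mu,\nu)=0$ forces $\mu=\nu$. I would also record the elementary inequality $d(\mu,\nu)\ge|\mu(\Omega)-\nu(\Omega)|$, obtained by taking $g\equiv 1$, which is what makes the completeness argument go through for measures of varying mass.

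Now let $\{\mu_n\}\subset\mathcal{M}_+$ be Cauchy in $d$. By the last inequality the masses $\mu_n(\Omega)=TV(\mu_n)$ form a Cauchy, hence bounded, sequence of reals, say $TV(\mu_n)\le M$. Since $\Omega$ is compact, $\mathcal{M}\cong(C(\Omega))^*$ with $C(\Omega)$ separable, so the ball $\{\mu:TV(\mu)\le M\}$ is sequentially weak-$*$ compact; extract a subsequence $\mu_{n_k}\to\mu$ weakly $*$. Testing against nonnegative $g\in C(\Omega)$ shows $\int g\,d\mu\ge0$, i.e. $\mu\in\mathcal{M}_+$. The subsequence is bounded in $\mathcal{M}$ and, $\Omega$ being compact, trivially tight (take $K_\epsilon=\Omega$), so Proposition \ref{narrow} upgrades $\mu_{n_k}\to\mu$ weakly $*$ to $d(\mu_{n_k},\mu)\to0$. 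Finally, a Cauchy sequence possessing a convergent subsequence converges to the same limit: for $\epsilon>0$ pick $N$ with $d(\mu_n,\mu_m)<\epsilon/2$ for $n,m\ge N$ and then $n_k\ge N$ with $d(\mu_{n_k},\mu)<\epsilon/2$, whence $d(\mu_n,\mu)\le d(\mu_n,\mu_{n_k})+d(\mu_{n_k},\mu)<\epsilon$ for all $n\ge N$; thus $\mu_n\to\mu$ in $(\mathcal{M}_+,d)$.

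The only step with genuine content is the passage from the weak-$*$ limit to convergence in the bounded-Lipschitz distance, and this is precisely Proposition \ref{narrow}, whose hypotheses (boundedness and tightness) are automatic here because $\Omega$ is compact; everything else is soft functional analysis (Banach--Alaoglu together with the Riesz representation) plus the standard observation that a Cauchy sequence with a convergent subsequence converges. If one wanted the statement for non-compact $\Omega$ the same scheme works with Prokhorov's theorem replacing Banach--Alaoglu, but that case is not needed here.
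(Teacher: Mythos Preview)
The paper does not actually supply a proof of Proposition \ref{compl}; it is stated as a known fact and then used in the proof of Corollary \ref{comp}. Your argument is correct and is precisely the natural one in this setting: it mirrors the proof the paper \emph{does} give for Corollary \ref{comp} (Banach--Alaoglu on $(C(\Omega))^*$ together with Proposition \ref{narrow}), adding only the observation that a $d$-Cauchy sequence has bounded total variation (via $g\equiv 1$), which is exactly what is needed to get into the weak-$*$ compactness regime. One cosmetic slip: the functions $g_k=(1-k\,\mathrm{dist}(\cdot,F))_+$ \emph{decrease} to $\mathbf{1}_F$, not increase; the separation argument is unaffected.
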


The following corollary is the very reason for which the bounded-Lipschitz distance is applied. It serves us as a topology with pointwise sequential compactness for measure-valued functions.
 What we mean is that if $f_n:[0,T]\mapsto ({\mathcal M},d)$ and $f_n$ are uniformly bounded in $L^\infty(0,T;({\mathcal M},TV))$ then for each $t\in[0,T]$ the sequence $f_n(t)$ is relatively 
compact in $({\mathcal M},d)$, which is one of the assumptions of the Arzela-Ascoli theorem.
\begin{coro}\label{comp}
Let $\{\mu_n\}_{n\in{\mathbb N}}$ be a sequence bounded in $({\mathcal M}_+,TV)$ with supports contained in some given ball. Then there exists a $({\mathcal M}_+,d)$-convergent subsequence $\{\mu_{n_k}\}$.
\end{coro}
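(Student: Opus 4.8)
The plan is to deduce Corollary \ref{comp} from Proposition \ref{narrow} together with the standard weak-$*$ (sequential) compactness of bounded sequences of Radon measures supported in a fixed compact set. First I would fix a closed ball $\overline{B}$ containing all the supports $\mathrm{supp}\,\mu_n$ and work on the compact set $\Omega\cap\overline{B}$ (or simply replace $\Omega$ by $\overline{B}$), so that we are in the compact-domain setting required by Proposition \ref{narrow}. Since $\{\mu_n\}$ is bounded in $({\mathcal M}_+,TV)$ and $\Omega$ compact gives ${\mathcal M}\cong (C_b(\Omega))^*$, the Banach--Alaoglu theorem furnishes a subsequence $\{\mu_{n_k}\}$ and a functional $\mu\in(C_b(\Omega))^*$ with $\mu_{n_k}\to\mu$ weakly $*$; testing against nonnegative $g\in C_b(\Omega)$ shows $\mu\in{\mathcal M}_+$.

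Next I would invoke tightness: because every $\mu_{n_k}$ is supported in the single compact set $K:=\Omega\cap\overline{B}$, the tightness condition holds trivially (take $K_\epsilon=K$ for all $\epsilon$, giving $|\mu_{n_k}|(\Omega\setminus K_\epsilon)=0$). Hence the hypotheses of Proposition \ref{narrow} are met for the sequence $\{\mu_{n_k}\}$ and its weak-$*$ limit $\mu$, and the ``only if'' direction of that proposition yields $d(\mu_{n_k},\mu)\to 0$. Since $\mu\in{\mathcal M}_+$, this is exactly the asserted $({\mathcal M}_+,d)$-convergence, and the corollary follows.

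The only genuinely delicate point is the passage from abstract weak-$*$ compactness to a \emph{sequential} statement on the metric space $({\mathcal M}_+,d)$; this is where Proposition \ref{narrow} does the real work, translating weak-$*$ convergence into convergence in the bounded-Lipschitz metric under the (here automatic) tightness and boundedness assumptions. One should be mildly careful that Definition \ref{bld} and the metric space $({\mathcal M}_+,d)$ are formally phrased for probability measures, so strictly one normalizes or restricts to a fixed total mass; but boundedness in $TV$ plus the homogeneity of $d$ under scaling makes this harmless, and completeness of $({\mathcal M}_+,d)$ from Proposition \ref{compl} is not even needed here since the limit $\mu$ is produced directly. I expect the write-up to be short: choose the ball, apply Banach--Alaoglu, note trivial tightness, apply Proposition \ref{narrow}.
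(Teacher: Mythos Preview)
Your proposal is correct and follows essentially the same route as the paper: restrict to a compact set containing all supports, use Banach--Alaoglu to extract a weak-$*$ convergent subsequence, note that tightness is automatic, and apply Proposition~\ref{narrow} to upgrade to convergence in $d$. The only minor difference is that you verify $\mu\in\mathcal{M}_+$ directly by testing against nonnegative continuous functions, whereas the paper appeals to the completeness of $(\mathcal{M}_+,d)$ from Proposition~\ref{compl}; your argument is slightly more direct here.
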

\begin{proof}
Since the supports of $\mu_n$ are uniformly bounded then there exist compact set $K\subset\subset\Omega\subset\subset\r^d$ such that
\begin{align*}
\bigcup_{n}{\rm supp}\,\mu_n\subset K.
\end{align*}
Thus we may treat $\{\mu_n\}_{n\in{\mathbb N}}$ as a sequence of measures defined on a compact $\Omega$. Then $({\mathcal M},TV)$ is isomorphic to $(C_b(\Omega))^*$, which is a separable normed vector 
space then by Banach-Alaoglu theorem the set $\{\mu_n:n=1,2,...\}$ is sequentially weakly * compact in $(C_b(\Omega))^*$. Therefore there exists a measure $\mu\in(C_b(\Omega))^*$ such that up to a 
subsequence $\mu_n$ converges to $\mu$ weakly * in $(C_b(\Omega))^*$. Proposition \ref{narrow} implies that the weak * $(C_b(\Omega))^*$ convergence is equivalent to the convergence in 
$d(\cdot,\cdot)$ if only $\{\mu_n\}_{n\in{\mathbb N}}$ is tight (which is true since $\mu_n$ vanish on $\Omega\setminus K$). Thus $\mu_n$ converges to $\mu$ in $d(\cdot,\cdot)$. Finally since 
by Proposition \ref{compl} $({\mathcal M}_+,d)$ is a complete space we conclude that actually $\mu\in({\mathcal M}_+,d)$ and the proof is finished.
\end{proof}

Lastly we present a useful lemma related to the bounded-Lipschitz distance.
\begin{lem}\label{bldist}
Let $d(\cdot,\cdot)$ be the bounded-Lipschitz distance. Then for any $\mu,\nu\in{\mathcal M}$ and any bounded and Lipschitz continuous function $g$, we have
\begin{align*}
\left|\int_\Omega gd\mu - \int_\Omega gd\nu\right|\leq \max\{\|g\|_\infty,Lip(g)\} d(\mu,\nu).
\end{align*}
\end{lem}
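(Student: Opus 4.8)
The plan is to reduce the claimed inequality to the defining supremum in Definition \ref{bld} by normalizing the test function. Set $M := \max\{\|g\|_\infty, Lip(g)\}$. First I would dispatch the degenerate case: if $M = 0$ then $g$ is identically zero (a constant function whose supremum norm also vanishes), so both integrals are zero and the inequality holds trivially. Hence from now on I may assume $M > 0$.

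Given $M > 0$, I would introduce $\tilde g := g/M$. By linearity of $\|\cdot\|_\infty$ and of the Lipschitz seminorm we get $\|\tilde g\|_\infty = \|g\|_\infty/M \le 1$ and $Lip(\tilde g) = Lip(g)/M \le 1$, so $\tilde g$ is an admissible competitor in the supremum defining $d(\mu,\nu)$ — note that it is here, and only here, that we use the \emph{maximum} rather than, say, the sum of the two quantities, which guarantees both constraints simultaneously. Definition \ref{bld} then yields
\begin{align*}
\left|\int_\Omega \tilde g\, d\mu - \int_\Omega \tilde g\, d\nu\right| \le d(\mu,\nu).
\end{align*}
Finally, using linearity of the integral, $\int_\Omega \tilde g\, d\mu - \int_\Omega \tilde g\, d\nu = M^{-1}\big(\int_\Omega g\, d\mu - \int_\Omega g\, d\nu\big)$, and multiplying the previous display by $M$ gives exactly the asserted bound.

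There is essentially no obstacle in this argument; the only points deserving a word of care are the degenerate case $M=0$ and the simultaneous verification of the two normalization constraints for $\tilde g$. One mild subtlety is that Definition \ref{bld} is stated for probabilistic measures, whereas Lemma \ref{bldist} is phrased for arbitrary $\mu,\nu\in\mathcal{M}$; I would simply remark that the same formula $\sup_g|\int g\,d\mu-\int g\,d\nu|$ (over the same class of $g$) defines a finite quantity on all of $\mathcal{M}$, applied to $\mu-\nu$, and the scaling argument above is unaffected by this extension.
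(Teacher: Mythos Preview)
Your argument is correct and is precisely the standard rescaling proof: normalize $g$ by $M=\max\{\|g\|_\infty,Lip(g)\}$ so that $\tilde g=g/M$ becomes admissible in Definition~\ref{bld}, then multiply through by $M$. The paper does not actually supply its own proof of Lemma~\ref{bldist}; it simply declares the result standard and refers to \cite{haliu}, so your write-up is fully in line with what the authors have in mind.
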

\begin{proof}
The proof of this lemma belongs to the standard theory and can be found, for example, in \cite{haliu}.
\end{proof}

\subsection{Measure-valued solutions to the kinetic equation}

We introduce the following weak formulation for (\ref{cscont}):
\begin{defi}\label{weakdef}
Let $T>0$. We  say that $f$ is a weak solution to  (\ref{cscont}) with the initial data $f_0\in {\mathcal M}_+$, 
such that  ${\rm supp}f_0 \subset B({\mathcal R_0})$ with ${\mathcal R_0} >0$ if 
\begin{enumerate}
\item  $f\in L^\infty(0,T;{\mathcal M}_+)$ and $\partial_t f\in L^p(0,T;(C^1_b(B({\mathcal R})))^*)$ for some $p>1$;
\item 
${\rm supp}f(t) \subset B({\mathcal R})$ for $t\in (0,T]$
for some positive constant ${\mathcal R}$;
\item The following identity holds:
\begin{align}\label{weak}
\int_0^T\int_{\r^{2d}}f[\partial_t\phi + v\nabla\phi]dxdvdt + \int_0^T\int_{\r^{2d}}F(f)f\nabla_v\phi dxdvdt = \\\nonumber
=-\int_{\r^{2d}}f_{0}\phi(\cdot,\cdot,0)dxdv
\end{align}
for all $\phi\in {\mathcal G}$, where
\begin{align*}
{\mathcal G}:=\Big\{&\phi\in C^1([0,T)\times\r^{2d}): \partial_t\phi,\nabla\phi,\nabla_v\phi\ {\rm are\ bounded}\\ &{\rm and\ Lipschitz\ continuous}\ {\rm and}\ \phi\ {\rm has\ a\ compact\ support\ in}\ t\Big\};
\end{align*}
\item The function $g(x,y,v,w,t):=(w-v)\psi(|x-y|)$ is integrable with respect to the measure $f(x,v,t)\otimes f(y,w,t)dxdvdydw$, 
i.e.  term $F(f)$ is defined as a measure with respect to the measure $fdxdv$. In particular by Fubini's theorem the integral
\begin{align*}
\int_0^T\int_{\r^{2d}}F(f)f\nabla_v\phi dxdvdt = \int_0^T\int_{\r^{4d}}g\nabla_v\phi f\otimes f dxdvdydwdt
\end{align*}
is bounded and the term $\dv [F(f)f]$ is well defined as a distribution;
\item For each pair of concentric balls $B((x_0,v_0),r)\subset B((x_0,v_0),R) \subset \r^{2d}$, the following statement holds: if
\begin{align}
{\rm supp}f_0\cap B((x_0,v_0),R)\subset B((x_0,v_0),r), \label{ccentr}
\end{align}
then there exists $T^*\in[0,T]$, such that
\begin{align}\label{propag}
{\rm supp}f(t)\cap B\left((x_0,v_0),\frac{3R+r}{4}\right) \subset B\left((x_0,v_0),\frac{r+R}{2}\right)
\end{align}
for all $t\in[0,T^*]$.
\end{enumerate}
\end{defi}
\begin{rem}\label{remcor}\rm
There is a natural question of the correspondence between solutions to (\ref{cscont}) in the sense of Definition \ref{weakdef} and solutions to (\ref{cs}).
 The answer to this question is to some merit positive, which we explain below.
Let
\begin{align}\label{discini}
f_0(x,v):=\sum_{i=1}^Nm_i\delta_{x_{i,0}}(x)\otimes\delta_{v_{i,0}}(v)
\end{align}
with $\sum_{i=1}^Nm_i=1$. Then $f_0$ defines an initial data $x_0=(x_{1,0},...,x_{N,0})$, $v_0=(v_{1,0},...,v_{N,0})$ for the system of ODE's (\ref{cs}). For this system let $(x,v)$ be a sufficiently
 smooth\footnote{By ''sufficiently smooth'' we mean for instance that $(x,v)\in W^{1,1}([0,T])$, which is a reasonable assumption in view of Theorem \ref{jpsmain}.} solution. Then the function
\begin{align}\label{remeq}
f(x,v,t):=\sum_{i=1}^Nm_i\delta_{x_{i}(t)}(x)\otimes\delta_{v_{i}(t)}(v)
\end{align} 
is a solution of (\ref{cscont}) in the sense of Definition \ref{weakdef} with the initial data $f_0$. Indeed, if we plug $f$ defined in (\ref{remeq}) into (\ref{weak}), by a simple use of a chain rule, we obtain
\begin{align*}
\int_0^T\sum_{i=1}^Nm_i\Big((\partial_t\phi)(x_i,v_i,t) + v_i(\nabla\phi)(x_i,v_i,t)\Big)\\
 + \sum_{i,j=1}^Nm_im_j\psi(|x_i-x_j|)(v_j-v_i)(\nabla_v\phi)(x_i,v_i,t)dt \\
= \int_0^T\sum_{i=1}^Nm_i\frac{d}{dt} \phi(x_i(t),v_i(t),t)dt = -\sum_{i=1}^Nm_i\phi(x_{i,0},v_{i,0},t) = -\int_{\r^{2d}}f_{0}\phi(\cdot,\cdot,0)dxdv
\end{align*}
for all $\phi\in{\mathcal G}$.
The converse assertion that a solution to (\ref{cscont}) in the sense of Definition \ref{weakdef} corresponds to a solution of (\ref{cs}) is also true provided that the initial data are of the form (\ref{discini}).
 However, the proof is much more involved and it is in fact the second part of the paper.
\end{rem}
\begin{defi} We say that $f$ is an atomic solution if it is of form (\ref{remeq}).
\end{defi}

\begin{defi}
In case of solutions of particle system (\ref{cs}) we say that $i$th and $j$th particles {\it collide} at the time $t$ if and only if $x_i(t)=x_j(t)$ and we say that they {\it stick together} at the time $t$ if 
and only if $x_i(t)=x_j(t)$ and $v_i(t)=v_j(t)$.
\end{defi}

\begin{rem}\rm\label{N-N}
Throughout the paper whenever we consider a solution of system (\ref{cs}) we assume that the number of particles is constant in time. However if in our proofs (particularly in Section \ref{sec5}) any 
two particles stick together, then we tend to treat them as a single particle with a bigger mass, thus reducing the total number of particles in the system. This is justified by the fact that according to \cite{jps} solutions to (\ref{cs}) with $\alpha\in(0,\frac{1}{2})$ are unique (see Theorem \ref{jpsmain}).
\end{rem}

\begin{rem}\rm
Point 5 of Definition \ref{weakdef} requires some explanation. Its purpose is to establish a local control over the propagation of the support of $f$. Basically if we can divide the support of $f_0$ into 
two parts of distance $R-r$, then in some small time interval $[0,T^*]$ the distances between those parts is no lesser than $\frac{R-r}{4}$.
\end{rem}

\begin{rem}\rm
In Section \ref{sec5} we frequently test our weak solution by various test functions that at the first glance may seem not admissible. 
In particular we test with functions with derivatives in $x$ and $v$ not necessarily Lipschitz continuous. This is however correct since by a simple density argument we may test (\ref{weak}) by $C^1$ functions. 
Moreover we are allowed to test (\ref{weak}) by functions that are not compactly supported in time. In such case we get a version of (\ref{weak}) with both endpoints of the time interval, 
\begin{align}\label{ugh}
\int_0^T\int_{\r^{2d}}f[\partial_t\phi + v\nabla\phi]dxdvdt + \int_0^T\int_{\r^{2d}}F(f)f\nabla_v\phi dxdvdt = \\=\int_{\r^{2d}}f(t)\phi(\cdot,\cdot,t)dxdv-\int_{\r^{2d}}f_{0}\phi(\cdot,\cdot,0)dxdv.\nonumber
\end{align}
The justification of the above equation is standard and can be found in the proof of Proposition \ref{pop},$(v)$ in Appendix \ref{app}.
\end{rem}

\section{Main result}\label{sec3}

The main result of the paper is the following.
\begin{theo}\label{main3}
Let $0<\alpha<\frac{1}{2}$. For any compactly supported initial data $f_0\in {\mathcal M}$ and any $T>0$, Cucker-Smale's flocking model (\ref{cscont}) admits at least one solution in the sense 
of Definition \ref{weakdef}.  Moreover if $f_0$ is of the form (\ref{discini}) then $f$ is atomic of  form (\ref{remeq}), hence it  is a unique measure-valued solution to (\ref{cscont}).
\end{theo}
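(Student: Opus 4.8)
The argument splits into two essentially independent parts, matching the two assertions of the theorem. For the \textbf{existence part}, the plan is to regularize by replacing $\psi$ with a bounded Lipschitz cut-off $\psi_\epsilon$, or equivalently to approximate the initial measure $f_0$ by a sequence of empirical atomic data $f_0^N$ of the form \eqref{discini} converging to $f_0$ in the bounded-Lipschitz distance $d$. For each $N$ the particle system \eqref{cs} has (by Theorem \ref{jpsmain}, the cited result of \cite{jps} valid for $\alpha\in(0,\tfrac12)$) a unique $W^{1,1}$ solution $(x_i^N(t),v_i^N(t))$, and hence $f^N(t)=\sum_i m_i^N\delta_{x_i^N(t)}\otimes\delta_{v_i^N(t)}$ solves \eqref{cscont} in the sense of Definition \ref{weakdef} by the computation in Remark \ref{remcor}. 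I would then derive $N$-uniform a priori bounds: conservation of total mass, a bound on the velocity support (the diameter of the velocity marginal is nonincreasing, since $\tfrac{d}{dt}v_i$ is an average of $v_j-v_i$), hence a bound on the position support growing at most linearly in $t$ — this yields the uniform support ball $B(\mathcal R)$ of point~2. Consequently $\{f^N(t)\}$ is bounded in $(\mathcal M_+,TV)$ with uniformly bounded supports, so Corollary \ref{comp} gives pointwise-in-$t$ precompactness in $(\mathcal M_+,d)$; combined with an equicontinuity estimate in $t$ (from the $L^p$ bound on $\partial_t f^N$ in $(C^1_b)^*$, which follows once one controls $\int F(f^N)f^N\nabla_v\phi$) and Arzelà--Ascoli, one extracts $f^N\to f$ in $C([0,T];(\mathcal M_+,d))$.

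The \textbf{main obstacle} in the existence part is passing to the limit in the nonlinear, singular term $\int_0^T\!\int F(f^N)f^N\nabla_v\phi = \int_0^T\!\int g\,\nabla_v\phi\, f^N\otimes f^N$ with $g(x,y,v,w)=(w-v)\psi(|x-y|)$, since $g$ is unbounded near the collision set $\{x=y\}$ and $d$-convergence only tests against bounded Lipschitz functions. The key is a \emph{uniform-in-$N$ estimate on the concentration of mass near collisions}, i.e.\ a bound on $\int_0^T\!\int_{|x-y|<\delta} \psi(|x-y|)\, f^N\otimes f^N$ that vanishes as $\delta\to0$ uniformly in $N$ — this is exactly what the restriction $\alpha<\tfrac12$ buys (it makes the relevant singular integral controllable, in the spirit of the $W^{1,1}$ theory of \cite{jps}), and it is also what forces point~4 of Definition \ref{weakdef} to hold in the limit. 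Point~5 (local propagation of support) passes to the limit from the analogous deterministic statement for the particle trajectories: if the support of $f_0^N$ splits across concentric balls, the sticking/collision mechanism and the velocity bound keep the two clusters separated on a short time interval; I would isolate this as a lemma on \eqref{cs} and then take $N\to\infty$ using $d$-convergence and lower semicontinuity of the support under $d$-limits on closed sets. I expect this propagation/concentration bookkeeping, rather than the soft compactness, to be the technical heart.

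For the \textbf{weak--atomic uniqueness part}, assume now $f_0=\sum_{i=1}^N m_i\delta_{x_{i,0}}\otimes\delta_{v_{i,0}}$ and let $f$ be \emph{any} weak solution in the sense of Definition \ref{weakdef}. The plan is to show $f(t)$ remains a sum of $N'$ atoms (with $N'\le N$, accounting for sticking as in Remark \ref{N-N}) with locations solving \eqref{cs}; uniqueness then follows from Theorem \ref{jpsmain}. The strategy is to track each atom by exploiting point~5: around each initial atom $(x_{i,0},v_{i,0})$ choose concentric balls separating it from the others, so \eqref{propag} confines the mass that starts in the inner ball to a slightly larger ball on $[0,T^*]$; testing \eqref{weak}/\eqref{ugh} against functions localized to these balls — in particular $\phi$ close to $x_k$, $v_k$, and $|x-x_k(t)|^2$, $|v-v_k(t)|^2$ type test functions, which are admissible after the density remarks — one derives that the mass $m_i$, the barycenter in $(x,v)$, and the second moments around the barycenter of the $i$-th cluster satisfy a closed ODE system forcing the variance to stay zero and the barycenter to solve \eqref{cs}. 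The \textbf{hard point} here is handling the interaction term $F(f)$ when two clusters approach or collide: one must show the singular force does not smear an atom, which again uses the $\alpha<\tfrac12$ concentration estimate together with the local support control of point~5, and one must treat the moment of sticking by re-labelling (Remark \ref{N-N}) and restarting the argument on the next time interval, with a continuation argument to cover $[0,T]$. Finally, since the resulting atomic solution coincides with the unique ODE solution and that ODE solution is itself a weak solution by Remark \ref{remcor}, the solution constructed in the existence part must equal it, giving the asserted uniqueness.
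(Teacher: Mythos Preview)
Your overall strategy matches the paper's --- atomic approximation plus Arzel\`a--Ascoli for existence, and a localized variance/Gronwall argument exploiting point~5 for weak--atomic uniqueness. Two points need correction, the second of which is a genuine gap.

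On existence, the concentration quantity you must control uniformly is not $\int_0^T\!\int_{|x-y|<\delta}\psi(|x-y|)\, f^N\!\otimes\! f^N$ but rather the same integral with an extra factor $|w-v|$ in the integrand; this factor is essential because stuck particles ($x_i=x_j$, $v_i=v_j$) contribute nothing to the force yet would make the unweighted integral infinite. The actual engine is an $L^p$-in-time bound (some $p>1$) on $\sum_{i,j} m_im_j\,\psi^p(|x_i^N-x_j^N|)|v_i^N-v_j^N|$, obtained by an integration-by-parts in time using $(v_j-v_i)=(x_j-x_i)'$; the condition $2q\alpha/p<1$ arising there is exactly where $\alpha<\tfrac12$ enters. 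This is Proposition~\ref{pop}, and it drives both the equicontinuity needed for Arzel\`a--Ascoli and the passage to the limit in the singular term (the paper substitutes $\psi$ by a cutoff $\psi_m$, controls the error uniformly in $N$ via the $L^p$ bound, then lets $m\to\infty$).

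On uniqueness, there is a missing idea. Point~5 separates atoms in the full phase space $(x,v)$, but to close the variance Gronwall you must compare $\psi(|x-y|)$ with $\psi(|x_a-y|)$, which requires a lower bound on $|x-y|$ in the $x$-variable \emph{alone}. When two initial atoms share the same position $x_{0,i}=x_{0,j}$ but have different velocities, point~5 says nothing about $|x-y|$. The paper's remedy is a \emph{cone-propagation lemma} (Lemmas~\ref{conelem} and~\ref{excon}): testing the weak equation with explicit functions like $\big((x-v_0t)^2-(t\epsilon)^2\big)_+^2$ shows that the $x$-support of each atom is trapped in the cone $x_{0,i}+tB_x(v_{0,i},\epsilon)$ for short time, forcing $|x-y|\geq Ct$ between atoms with coincident initial positions. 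This converts $|\psi'|\lesssim t^{-1-\alpha}$ into a Gronwall coefficient $A(t)\sim t^{-1/2-\alpha}$ --- after pairing it with the \emph{weighted} test function $t^{-1}|x-x_a(t)|^2$, not merely $|x-x_a|^2$ --- and integrability of $A$ near $t=0$ is again precisely $\alpha<\tfrac12$. Without this cone mechanism your moment argument would fail at the comparison of the singular kernel across clusters.
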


The part of the proof concerning the issue of existence follows from analysis of  approximation by atomic solutions
 originating from sums of Dirac's deltas, which correspond in the sense of Remark \ref{remcor} to solutions of (\ref{cs}). The main idea behind this approach is twofold.
First, we have  better  regularity of solutions of (\ref{cs}) for $\alpha<\frac{1}{2}$. 
It was connected to results from \cite{jps}, where we proved that for $0<\alpha<\frac{1}{2}$, system (\ref{cs}) admits a unique $W^{1,1}([0,T])$ solution $(x,v)$, which by Remark \ref{remcor} corresponds to 
a solution of (\ref{cscont}) in the sense of Definition \ref{weakdef}. Since in fact $\alpha\in(0,\alpha_0)$ for some $\alpha_0<\frac{1}{2}$, we can in fact  prove that $(x,v)$ is 
bounded in $W^{1,p}([0,T])$ for some $p>1$. 
 Such bound  provides equicontinuity of sequences of solutions of (\ref{cs}), which allows to extract a convergent subsequence. The second element of the proof 
is to change the way of looking at the alignment force term
\begin{align}\label{MR1}
\int_0^T\int_{\r^{2d}}F(f_n)f_n\nabla_v\phi dxdvdt,
\end{align}
where if $f_n\rightharpoonup f$ then it is not clear whether $F(f_n)f_n\rightharpoonup F(f)f$. It is useful to look at (\ref{MR1}) as
\begin{align*}
\int_0^T\int_{\r^{4d}}\psi(|x-y|)(w-v)\nabla_v\phi d\mu_ndt \mbox{ \ \ \  for $d\mu_n:= f_n(x,v,t)\otimes f_n(y,w,t)dxdvdydw$}.
\end{align*}

The uniqueness part of Theorem \ref{main3} is explained and proved in section \ref{sec5}. The main idea is to analyze possible support of 
measure-valued solutions for initial atomic configurations. It is related to point 5 of Definition \ref{weak} of weak solutions to (\ref{cscont}). We emphasize that we derive the propagation of the support described in point 5 from the behavior of the approximate solutions. The question whether these properties are exhibited by a wider class of possible weak solutions is out of the scope of this paper. Such generalization would require an  essentially different approach to
construction of solutions.

\begin{rem}\rm
Throughout the remaining part of the paper we assume without a loss of generality that the total mass of $f_0$ equals to $1$.
\end{rem}

Let us give  an overview of the proof of existence. Suppose that $f_0$ is a given, compactly supported measure belonging to ${\mathcal M}$ and assume without a loss of generality that
\begin{align}\label{wlog}
{\rm supp}f_0\subset B(R_0),
\end{align}
where $B(R_0)$ is a ball centered at $0$ with radius $R_0$. For such $f_0$ we take $f_{0,\epsilon}\in{\mathcal M}$ of the form
\begin{align}\label{eid}
f_{0,\epsilon}=\sum_{i=1}^Nm_i\delta_{x_{0,i}^\epsilon}\otimes\delta_{v_{0,i}^\epsilon},
\end{align}
which corresponds to the initial data $(x_{0,\epsilon},v_{0,\epsilon})$ to a particle system (\ref{cs}). Moreover we assume that
\begin{align*}
d(f_{0,\epsilon},f_0)\stackrel{\epsilon\to 0}{\longrightarrow} 0
\end{align*}
and that the support of $f_{0,\epsilon}$ is contained in $B(2R_0)$. The existence of such approximation is standard (we refer for example to the beginning of section 6.1 in \cite{haliu} for the details). 
Now suppose that $(x^n_\epsilon,v^n_\epsilon)$ is a solution to (\ref{cs}) with the communication weight
\begin{align}\label{psin}
\psi_n(s):=\min\{\psi(s), n\},
\end{align}
subjected to the initial data $(x_{0,\epsilon},v_{0,\epsilon})$, which by Remark \ref{remcor} means that
\begin{align}\label{disc}
f^n_\epsilon = \sum_{i=1}^Nm_i\delta_{x_{\epsilon,i}^n}\otimes\delta_{v_{\epsilon,i}^n}
\end{align}
is a solution of (\ref{cscont}) with the initial data $f_{0,\epsilon}$. Our goal  is to converge with $\epsilon$ to $0$ and 
with $n$ to $\infty$ to obtain a solution $f$ of equation (\ref{cscont}) subjected to the initial data $f_0$. 

The proof of existence can be summarized in the following steps:

\begin{description}
\item[Step 1.] Given $T>0$, for each $\epsilon$ and $n$, we prove existence of a solution $f^n_\epsilon$ corresponding to the initial data $f_{0,\epsilon}$ and satisfying various regularity properties.
\item[Step 2.] We take a sequence $f_n = f^n_\epsilon$ for $\epsilon=\frac{1}{n}$. Due to the conservation of mass and the regularity proved in step 1 we extract a subsequence ${f_{n_k}}$ converging
in $L^\infty(0,T;({\mathcal M}_+,d))$ to some $f\in L^\infty(0,T;{\mathcal M}_+)$.
\item[Step 3.] We converge with each term in the weak formulation for ${f_{n_k}}$ to the respective term in
 the weak formulation for $f$. This can be easily done for each term except the alignment force term i.e. the term
\begin{align}\label{3a}
\int_0^T\int_{\r^{2d}}F_{n_k}(f_{n_k})f_{n_k}\nabla_v\phi dxdvdt.
\end{align}
\item[Step 4.] Analysis of the alignment force term is not straightforward.  Formally we multiply an $L_p$ function by a measure, thus the result is not well defined at the very first sight. 
We replace (\ref{3a})  with an $n_k$-independently regular modification of the form
\begin{align*}
\int_0^T\int_{\r^{2d}}F_m(f_{n_k})f_{n_k}\nabla_v\phi dxdvdt.
\end{align*}
The error between the alignment force term and it's substitute will  be controlled in terms of $m$ and uniformly with respect to $n_k$.
\item[Step 5.] For such subsequence we converge with the substitute alignment force term to
\begin{align*}
\int_0^T\int_{\r^{2d}}F_m(f)f\nabla_v\phi dxdvdt.
\end{align*}
\item[Step 6.] We are then left with converging with the substitute alignment force term to the original alignment force term i.e. with $m\to\infty$.
We show that $F(f)$ is a measure with respect to the measure $df=fdxdv$.
\item[Step 7.] We finish the proof by making sure that each and every point of Definition \ref{weakdef} is satisfied by our candidate for the solution.
\end{description}
\begin{rem}\rm\label{refrem}
Before we proceed further, let us compare our strategy to the mean-field limit used for the CS model with regular weight like in \cite{haliu}. For Lipschitz continuous $\psi$, through standard argumentation, 
one can easily show well-possedness for the particle system \eqref{cs}. This includes Lipschitz continuous dependence of solutions with respect to the perturbations of the initial data. Such stability
 can in turn be translated into Lipschitz continuous dependence of measure-valued solutions with respect to perturbations of initial measures. Thus inequality
\begin{align}\label{stab}
d(f_1,f_2)\leq C(T)d(f_{0,1},f_{0,2})
\end{align}
can be obtained, where $f_1$ and $f_2$ are two solutions of the type \eqref{remeq} with initial data $f_{0,1}$ and $f_{0,2}$ of the form \eqref{discini}. Then  stability ensures  possibility
 to extract a convergent subsequence as presented after Remark \ref{wlog}. If one aims to apply such reasoning to the case with singular weight it turns out that the constant $C(T)$ from \eqref{stab} 
depends on the Lipschitz constant of $\psi$ on $[\delta,\infty)$, where $\delta$ is the lower bound of the distances between particles. However even for the initial data, the distance between
 particles $\delta$ converges to $0$ as $\epsilon\to 0$. Thus the Lipschitz constant of $\psi$ on $[\delta,\infty)$ converges to infinity and so does $C(T)$. There are ways to overcome this difficulty at least 
locally in time like in \cite{carchoha}, but for a global in time result stability of the type \eqref{stab} does not seem to work since  particles may eventually collide causing a blowup of $C(T)$. For the
 discussion on the possibility of collisions between particles in the singular case of $\alpha\in(0,1)$ we refer to \cite{jpe}, while in case of $\alpha\geq 1$ we refer to \cite{ccmp}.
\end{rem}

Let us state some various properties of the approximative solutions $f^n_\epsilon$. It is in fact the first step of the proof (as presented above) but since it is self-contained and quite lengthy 
we will present it in a form of separate proposition the proof of which can be found in Appendix \ref{app}. 

\begin{prop}\label{pop}
Given $T>0$.
Let $f_{0,\epsilon}$ be of the form (\ref{discini}). Then for each $n=1,2,...$, there exists a unique solution $f^n_\epsilon$ to kinetic equation (\ref{cscont}) that 
corresponds\footnote{See Remark \ref{remcor}.} to a smooth and classical solution $(x^n,v^n)$ of particle system (\ref{cs}). Moreover there exist $n$ and $\epsilon$ independent 
constants $M>0$ and $p>1$, such that the following conditions are satisfied:
\begin{enumerate}[(i)]
\item For all $t\in[0,T]$ and all $n$ and $\epsilon$ the total mass of $f^n_\epsilon$ i.e. the value $\int_{\r^{2d}}f^n_\epsilon dxdv$ is equal to $1$.
\item The support of $f^n_\epsilon$ is contained in a ball $B({\mathcal R})$, where ${\mathcal R}:=2R_0(T+1)$.
\item We have
\begin{align*}
\int_0^T\sum_{i=1}^{N_n}m^n_{i,\epsilon}\left|\dot{v}^n_{i,\epsilon}\right|^pdt + 
\int_0^T\sum_{i,j=1}^{N_n}m_im_j\psi_n^p(|x^n_{i,\epsilon}-x^n_{j,\epsilon}|) |v^n_{i,\epsilon}-v^n_{j,\epsilon}|^pdt\leq M({\mathcal R}).
\end{align*}
\item We have
\begin{align*}
\int_0^T\sum_{i,j=1}^{N_n}m_im_j\psi_n^p(|x^n_{i,\epsilon}-x^n_{j,\epsilon}|) |v^n_{i,\epsilon}-v^n_{j,\epsilon}|dt\leq M({\mathcal R}).
\end{align*}
\item For each Lipschitz continuous and bounded $g:\r^{2d}\to\r$, we have
\begin{align*}
\left\|\frac{d}{dt}\int_{\r^{2d}}gf^n_\epsilon dxdv\right\|_{L^p([0,T])}\leq M_g(Lip(g),{\mathcal R})
\end{align*}
\end{enumerate}
\end{prop}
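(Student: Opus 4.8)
The strategy is to obtain all five items from the corresponding statements about the underlying particle system \eqref{cs} with the truncated weight $\psi_n$, using the correspondence of Remark \ref{remcor}, and then to quote the regularity result of \cite{jps} to get $n$- and $\epsilon$-independent control. First I would observe that for each fixed $n$ the weight $\psi_n$ defined in \eqref{psin} is bounded and Lipschitz continuous on $[0,\infty)$, so the classical Cauchy--Lipschitz theory applies to \eqref{cs}: there exists a unique global smooth solution $(x^n,v^n)$ emanating from the datum $(x_{0,\epsilon},v_{0,\epsilon})$, and by Remark \ref{remcor} the empirical measure $f^n_\epsilon$ in \eqref{disc} solves \eqref{cscont} in the sense of Definition \ref{weakdef}. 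Item (i) is then immediate: testing \eqref{ugh} with $\phi\equiv 1$ (admissible after the density remark) gives conservation of total mass, which for the empirical measure is just $\sum_i m_i=1$ by construction.

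For item (ii) the key is the standard a priori bound on the velocities. Since $\psi_n\ge 0$ and the CS force is an average of the $v_j-v_i$ with nonnegative weights $m_j\psi_n$, the velocity of every particle stays in the convex hull of the initial velocities: $\max_i|v_i^n(t)|\le \max_i|v_{i,0}^\epsilon|\le R_0$ (using ${\rm supp}\,f_{0,\epsilon}\subset B(2R_0)$, so actually $\le 2R_0$; the constant $\mathcal R=2R_0(T+1)$ absorbs this). Integrating, $|x_i^n(t)|\le |x_{i,0}^\epsilon|+t\max_i|v_{i,0}^\epsilon|\le 2R_0+2R_0 T=2R_0(T+1)=\mathcal R$, hence ${\rm supp}\,f^n_\epsilon(t)\subset B(\mathcal R)$ for all $t\in[0,T]$, uniformly in $n,\epsilon$. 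I would phrase this as a Gronwall-free convexity argument since the weights are nonnegative.

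Items (iii) and (iv) are where the work of \cite{jps} enters, via Theorem \ref{jpsmain} (cited but not reproduced in this excerpt); I would invoke it to assert that for $\alpha\in(0,\alpha_0)$ with $\alpha_0<\tfrac12$, the solution of \eqref{cs} with the \emph{genuine} singular weight $\psi$ is bounded in $W^{1,p}([0,T])$ for some $p>1$, with a bound depending only on $T$ and on the size $\mathcal R$ of the support (hence on the data only through $R_0$), and \emph{uniformly in the truncation parameter $n$} — this last point because the $W^{1,p}$ estimate of \cite{jps} is derived from quantities like $\int_0^T\sum_{i,j} m_im_j\psi^p(|x_i-x_j|)|v_i-v_j|^p\,dt$ that are controlled by energy-type functionals insensitive to the truncation (one checks $\psi_n\le\psi$ and that the relevant dissipation estimates pass to the truncated system verbatim). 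This yields (iii) since $\dot v_i^n=\sum_j m_j(v_j^n-v_i^n)\psi_n(|x_i^n-x_j^n|)$ and, by Jensen applied to the probability weights $m_j$, $|\dot v_i^n|^p\le \sum_j m_j \psi_n^p(|x_i^n-x_j^n|)|v_j^n-v_i^n|^p$; summing against $m_i$ and integrating gives exactly the first term of (iii) bounded by the second, which is $\le M(\mathcal R)$ by the \cite{jps} estimate. Item (iv) follows from (iii) by H\"older in time (or directly, since $p>1$ and the time interval is finite and the integrand is dominated). The main obstacle here — and the reason this is quarantined in the Appendix — is verifying that the $W^{1,p}$ bound of \cite{jps} really is uniform in $n$; I would do this by rerunning their argument with $\psi$ replaced by $\psi_n$, noting at each step that $\psi_n$ is nonincreasing, satisfies $\psi_n(s)\uparrow\psi(s)$, and that all the inequalities used (in particular the lower bounds forcing separation of particles and the integrability estimates on $\psi^p$ near $0$) either hold for $\psi_n$ a fortiori or follow from the uniform-in-$n$ control of the collision/sticking structure.

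Finally, item (v): for Lipschitz bounded $g$, the correspondence of Remark \ref{remcor} gives $\int_{\r^{2d}} g\,f^n_\epsilon\,dxdv=\sum_i m_i g(x_i^n,v_i^n)$, so
\begin{align*}
\frac{d}{dt}\int_{\r^{2d}} g\,f^n_\epsilon\,dxdv=\sum_{i=1}^{N_n} m_i\bigl(\nabla_x g\cdot v_i^n+\nabla_v g\cdot\dot v_i^n\bigr)
\end{align*}
in the a.e./distributional sense (valid since $(x^n,v^n)\in W^{1,p}$; for the truncated system it is in fact smooth). Taking $L^p([0,T])$ norms, the first piece is bounded by $Lip(g)\,\mathcal R\,T^{1/p}$ using (ii), and the second by $Lip(g)\bigl(\int_0^T\sum_i m_i|\dot v_i^n|^p\,dt\bigr)^{1/p}\le Lip(g)\,M(\mathcal R)^{1/p}$ using (iii) and Jensen on the probability weights $m_i$ to pull the sum inside the $p$-th power. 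This gives the asserted bound $M_g(Lip(g),\mathcal R)$, uniform in $n$ and $\epsilon$, completing the proposition. I would remark that the density argument mentioned after Remark \ref{N-N} also justifies using these (not necessarily Lipschitz-derivative) test functions in \eqref{weak}, but for (v) the direct computation from the ODE is cleaner.
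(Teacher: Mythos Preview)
Your treatment of (i), (ii), and (v) is correct and essentially matches the paper (for (v) your direct ODE computation is in fact cleaner than the paper's route through the weak formulation with a time cutoff $\chi_{\epsilon,t}$). The genuine gaps are in (iii) and (iv).

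For (iii): Theorem \ref{jpsmain} as stated only yields $v\in W^{1,1}$, not $W^{1,p}$ for some $p>1$; the $L^p$ upgrade is precisely what the paper \emph{proves} in the Appendix rather than something it imports. The mechanism you are missing is a concrete integration-by-parts identity in time. After Jensen and two applications of Young's inequality (with exponents $q/p$ and $2/p$, for $1<p\le q$ close to $1$), the problem reduces to controlling
\begin{align*}
A=\int_0^T |v_i-v_j|^2\,\psi_n^{2q/p}(|x_i-x_j|)\,dt\le \int_0^T |v_i-v_j|^2\,|x_i-x_j|^{-\lambda}\,dt,\qquad \lambda:=\tfrac{2q\alpha}{p}<1.
\end{align*}
Componentwise, $(v_j^k-v_i^k)\,|x_i^k-x_j^k|^{-\lambda}$ is, up to the factor $(1-\lambda)^{-1}$, the time derivative of $(x_j^k-x_i^k)|x_i^k-x_j^k|^{-\lambda}$; integrating by parts in $t$ therefore converts $A$ into boundary terms plus integrals of $|\dot v_i|\cdot|x_i-x_j|^{1-\lambda}$. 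This closes first for $p=q=1$ (absorbing a small multiple of $\sum_i m_i\int_0^T|\dot v_i|$ into the left-hand side), and then bootstraps to $p>1$. Your sentence ``rerun their argument with $\psi$ replaced by $\psi_n$'' is the correct instinct --- the uniformity in $n$ enters only through $\psi_n\le\psi$ in the displayed inequality above --- but it is not a proof; the actual content is this identity, and without it you have no $n$-independent handle on the second term of (iii).

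For (iv): your claim that it follows from (iii) ``by H\"older in time (or directly, since \ldots\ the integrand is dominated)'' is false. Item (iv) asks for a bound on $\int_0^T\sum m_im_j\,\psi_n^p\,|v_i-v_j|\,dt$ while (iii) controls $\int_0^T\sum m_im_j\,\psi_n^p\,|v_i-v_j|^p\,dt$; since $p>1$ and $|v_i-v_j|$ can be arbitrarily small, the former integrand is \emph{not} dominated by the latter (indeed the inequality $|v_i-v_j|\le C|v_i-v_j|^p$ would require a positive lower bound on $|v_i-v_j|$). The paper obtains (iv) by rerunning the same Young/integration-by-parts scheme with the parameter choice $1=p<q$, so that $\psi_n^q$ (with $q$ playing the role of the exponent in (iv)) appears at the intermediate step; this is a genuinely separate computation, not a corollary of (iii).
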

\begin{rem}\rm
Point $(iii)$ of Proposition \ref{pop} implies in particular that the sequence $(x^n_\epsilon,v^n_\epsilon)$ is uniformly bounded in $W^{1,p}([0,T])$. We mention this to keep the 
continuity with the idea of the proof presented at the beginning of this section.
\end{rem}

\begin{rem}\rm\label{xto0}
It is worthwhile to note that since by $(iii)$ from Proposition \ref{pop} the derivative of velocity $\dot{v}$ is uniformly integrable, then
\begin{align*}
|v^n_i(t)-v^n_i(0)|\leq\int_0^t|\dot{v}_i^n|ds\leq \omega(t)\to 0 \mbox{ \ \ as  \ } t\to 0.
\end{align*} 
Moreover the function $\omega$ is independent of $i$ and $n$.
\end{rem}

Proposition \ref{pop} is similar and was inspired by Theorem 3.1 from \cite{jps} that we present below for readers convenience.

\begin{theo}\label{jpsmain}
Let $\alpha\in(0,\frac{1}{2})$ be given. Then for all $T>0$ and arbitrary initial data there exists a unique solution $(x,v)$ to (\ref{cs}) with communication weight given by 
(\ref{psi}). Moreover $v\in W^{1,1}([0,T])$, and thus $x\in W^{2,1}([0,T])$.
\end{theo}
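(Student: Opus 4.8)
The plan is the classical one for interaction ODEs with a singular kernel: regularize the weight, prove bounds that are uniform in the regularization parameter, pass to a limit, and obtain uniqueness separately by a Gronwall-type estimate. The whole difficulty is concentrated in one uniform a priori bound, and it is precisely that bound which forces the restriction $\alpha<\frac12$.

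First, for $n\in\n$ consider the system (\ref{cs}) with the bounded and Lipschitz weight $\psi_n(s):=\min\{\psi(s),n\}$; since $\psi_n$ is bounded and Lipschitz, classical ODE theory gives a unique global $C^1$ solution $(x^n,v^n)$ with the prescribed initial data (the velocities cannot blow up because the alignment force is controlled by the velocity diameter). Several facts are uniform in $n$: the momentum $\sum_i m_i v^n_i$ is conserved, the velocity diameter $\max_{i,j}|v^n_i-v^n_j|$ is nonincreasing (hence all velocities stay in a fixed bounded set and the positions grow at most linearly), and, multiplying the velocity equation by $v^n_i$ and summing, one gets the dissipation identity
\begin{align*}
\sum_i m_i|v^n_i(t)|^2+\int_0^t\sum_{i,j}m_im_j\,\psi_n(|x^n_i-x^n_j|)\,|v^n_i-v^n_j|^2\,ds=\sum_i m_i|v^n_{i,0}|^2 .
\end{align*}

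The heart of the proof is to show that $\int_0^T\sum_i m_i|\dot v^n_i|\,dt$ is bounded uniformly in $n$ -- and, when $\alpha$ is assumed strictly below $\frac12$, that in fact $\int_0^T\sum_i m_i|\dot v^n_i|^p\,dt$ is uniformly bounded for some $p>1$. Since $|\dot v^n_i|\le\sum_j m_j|v^n_i-v^n_j|\,\psi_n(|x^n_i-x^n_j|)$ and the velocity differences are uniformly bounded, writing $d^n_{ij}:=|x^n_i-x^n_j|$ it is enough to control $\int_0^T\psi_n(d^n_{ij})\,dt$ on the times where $d^n_{ij}$ exceeds the cut-off scale $n^{-1/\alpha}$, and to control $\int_0^T|v^n_i-v^n_j|\,\psi_n(d^n_{ij})\,dt$ on the remaining times. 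In the first regime $\psi_n$ coincides with $\psi$, and from $|\dot d^n_{ij}|\le|v^n_i-v^n_j|$ together with the dissipation identity one obtains a uniform bound on $\int(\dot d^n_{ij})^2(d^n_{ij})^{-\alpha}\,dt$ over that set; combined with the favourable sign of the damping term $-(m_i+m_j)\psi_n(d^n_{ij})\dot d^n_{ij}$ in the second-order equation satisfied by $d^n_{ij}$, this forces $d^n_{ij}$ to vanish at least quadratically in time near its small values, so that $\psi(d^n_{ij})\sim|t-t_\ast|^{-2\alpha}$ there, which is integrable in time exactly because $2\alpha<1$. In the second (near-collision) regime the same damping term is of size $O(n)$, so $|v^n_i-v^n_j|$ decays fast enough to beat the singular prefactor $\psi_n(d^n_{ij})=O(n)$, and the contribution remains bounded. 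I expect this uniform estimate to be the main obstacle of the whole argument.

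With the uniform $W^{1,1}$ (respectively $W^{1,p}$) bound in hand, the Arzela--Ascoli theorem yields a subsequence of $(x^n,v^n)$ converging uniformly to some $(x,v)$ with $v\in W^{1,1}([0,T])$, hence $x\in W^{2,1}([0,T])$. Off the closed set of times at which some $d^n_{ij}\to0$ -- which by the previous step has zero Lebesgue measure -- the forcing $\sum_j m_j(v^n_j-v^n_i)\psi_n(d^n_{ij})$ converges pointwise, while the uniform bound in $L^1_t$ supplies the equi-integrability required to pass to the limit by Vitali's theorem; a coincidence of positions can occur only on a time set of measure zero, except for a pair that sticks together, which by definition shares the same velocity, so the forcing is a well-defined $L^1_t$ function once the ambiguous factor $0\cdot\infty$ is read as $0$, consistently with treating a stuck pair as one heavier particle (Remark \ref{N-N}). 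Hence $(x,v)$ solves (\ref{cs}). Uniqueness is then obtained by a Gronwall estimate: subtracting two $W^{1,1}$ solutions with the same data and pairing the velocity equations with the velocity differences $v_i-\tilde v_i$, the genuinely singular contribution splits -- after symmetrising in $i$ and $j$ -- into a manifestly dissipative term, which owes its sign to the symmetry of the weight $\psi(|x_i-x_j|)$, and a remainder involving $\psi(|x_i-x_j|)-\psi(|\tilde x_i-\tilde x_j|)$ that is absorbed using the uniform $L^1_t$ control from the core step (and, at instants of sticking, the reduction of Remark \ref{N-N}); since the two solutions agree at $t=0$ they agree for all $t$. The technical parts of the core estimate are those of \cite{jps}, and the uniqueness argument is spelled out in Appendix \ref{app}.
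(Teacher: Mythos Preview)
Your existence sketch is essentially that of \cite{jps}: regularize, obtain a uniform $W^{1,p}$ bound on $\dot v^n$ (which is indeed where $\alpha<\tfrac12$ enters), and pass to the limit. The paper cites \cite{jps} for this and also derives the uniform bound in Appendix~\ref{app} by a somewhat different computation---an integration by parts based on $\tfrac{d}{dt}\big[(x_j^k-x_i^k)|x_i^k-x_j^k|^{-\lambda}\big]$ rather than your ``quadratic vanishing near collisions'' heuristic---but the overall outline agrees.

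The uniqueness argument, however, has a genuine gap. After symmetrising you are left with a remainder of the form
\[
\sum_{i,j} m_im_j\,(v_i-\tilde v_i)\cdot(\tilde v_j-\tilde v_i)\,\big[\psi(|x_i-x_j|)-\psi(|\tilde x_i-\tilde x_j|)\big],
\]
and you propose to absorb it ``using the uniform $L^1_t$ control from the core step''. But an $L^1_t$ bound on $t\mapsto\psi(|x_i-x_j|)$ says nothing about the \emph{difference} $\psi(|x_i-x_j|)-\psi(|\tilde x_i-\tilde x_j|)$ as a function of $\delta x$: since $|\psi'(s)|\sim s^{-1-\alpha}$ is unbounded near $0$, you cannot invoke the mean-value theorem without a pointwise lower bound on $|x_i-x_j|$ and $|\tilde x_i-\tilde x_j|$. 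The dangerous pairs are those with $x_{0,i}=x_{0,j}$ but $v_{0,i}\ne v_{0,j}$; there the distance is zero at $t=0$, and nothing in your $L^1_t$ estimates rules out a contribution of order $t^{-1-\alpha}|\delta x|$ with no compensating factor, so your Gronwall inequality does not close.

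The paper's proof in Appendix~\ref{app} supplies precisely this missing ingredient. Using the cone--propagation Lemmas~\ref{conelem} and~\ref{excon}, it shows that for such a pair both solutions satisfy $|x_i(t)-x_j(t)|\ge Ct$ and $|\tilde x_i(t)-\tilde x_j(t)|\ge Ct$ on a short interval $[0,T_*]$. Then $|\psi'|\le Ct^{-1-\alpha}$ on the relevant range, and combined with the trivial bound $|\delta x_i(t)|\le t\sup_{\tau\le t}|\delta v_i(\tau)|$ this yields a Gronwall coefficient $\sim t^{-\alpha}$, which is integrable. Your write-up cites Appendix~\ref{app} but does not mention this lower-bound step; without it the argument you describe is incomplete.
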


The proof of the above theorem can be found in \cite{jps}. Moreover its existence part follows also directly from Theorem \ref{main3} and Remark \ref{remcor}. Furthermore our argumentation
 from Section \ref{sec5} can be used to simplify the uniqueness part of the proof of Theorem \ref{jpsmain}. We present the simplified proof in Appendix \ref{app} for the sake of completeness.

\section{Proof of Theorem \ref{main3} (existence)}\label{sec4}
In this section we follow the steps presented in the previous section and prove the existence part of Theorem \ref{main3}.\\

{\bf Step 1.} From the very beginning we fix $T>0$.  Proposition \ref{pop} and Remark \ref{remcor} ensure the existence of $f^n_\epsilon$ with properties $(i)$-$(v)$ from Proposition \ref{pop}.
We solve particle system (\ref{cs}) with initial data (\ref{eid}) in the time interval $[0,T]$ under assumption that the communication weight is in form (\ref{psin}).
 By Proposition \ref{pop} we are ensured that for some $p>1$
\begin{align*}
 \|f^n_\epsilon\|_{L^\infty(0,T; \mathcal{M})} &= 1,\qquad 
\|F_n(f^n_\epsilon)f^n_\epsilon\|_{L^p(0,T;\mathcal{M})} \leq M(T).
\end{align*}

{\bf Step 2.} We take $\epsilon=\frac{1}{n}$ and denote $f_n:=f^n_\frac{1}{n}$. Since $f_n$ is of the form (\ref{disc}) it is clear that
\begin{align*}
\int_{\r^d\times \r^d} f_n dxdv = \sum_{i=1}^{N_n}m_{i,n} = 1.
\end{align*}
For each $n$ the function $f_n$ may be treated as a mapping from $[0,T]$ into the metric space $({\mathcal M}_+,d)$. For the purpose of showing that $f_n$ has a convergent subsequence 
we use Arzela-Ascoli theorem. We  make sure  $f_n$ is a bounded and equicontinuous sequence of functions with a relatively compact pointwise sequences $f_n(t)$. Uniform boundedness of $f_n$ is implied 
by the conservation of mass, 
while relative compactness of $f_n(t)$ follows from the uniform boundedness of $f_n(t)$ in TV topology and Corollary \ref{comp}. Finally in order to prove equicontinuity of $f_n$ we take 
arbitrary $s,t\in[0,T]$ and arbitrary Lipschitz continuous, bounded function $g$ with $Lip(g)\leq 1$ and $\|g\|_\infty\leq 1$ and use estimation $(v)$ from Proposition \ref{pop} to write
\begin{align}\label{bla}
\left|\int_{\r^{2d}}g(f_n(s)-f_n(t))dxdv\right|=\left|\int_t^s\frac{d}{dr}\int_{\r^{2d}} gf_ndxdvdr\right|=:\omega(|s-t|).
\end{align}
Point $(v)$ of Proposition \ref{pop} states that functions $t\mapsto\frac{d}{dt}\int_{\r^{2d}}gf_n(t)dxdv$ are uniformly bounded in $L^p([0,T])$ for some $p>1$, which in particular means that 
they are uniformly integrable. On the other hand it implies that the function $\omega$ is a {\it good} modulus of uniform continuity for the left-hand side of (\ref{bla}). Now since this estimation does 
not depend on the choice of $g$ (only on the choice of $Lip(g)$), it is also valid for the supremum over all $g$, which implies that
\begin{align*}
d(f_n(s),f_n(t))\leq\omega(|s-t|).
\end{align*}
The above inequality proves that the sequence of functions $t\mapsto f_n(t)$ is equicontinuous as a mapping from $[0,T]$ to $({\mathcal M},d)$ (recall the bounded--Lipschitz distance defined 
in (\ref{bld})). Thus the sequence $f_n$ satisfies the assumptions of Arzela-Ascoli theorem. Therefore there exists $f\in L^\infty(0,T;{\mathcal M}_+)$, such that up to a subsequence
\begin{align*}
\|d(f_n,f)\|_\infty\to 0.
\end{align*}
By $(ii)$ from Proposition \ref{pop} it implies that the support of $f$ is included in $B({\mathcal R})$.

{\bf Step 3.} After a brief look at the weak formulation for $f_n$ i.e. (\ref{weak}), we understand that since $f_n\to f$ in $L^\infty(0,T;({\mathcal M}_+,d))$, then in particular for $\phi\in {\mathcal G}$,
 we have
\begin{align*}
\int_0^T\int_{\r^{2d}}f_n[\partial_t\phi + v\nabla\phi]dxdvdt\to \int_0^T\int_{\r^{2d}}f[\partial_t\phi + v\nabla\phi]dxdvdt
\end{align*}
and
\begin{align*}
\int_{\r^{2d}}f_{0,\frac{1}{n}}\phi(\cdot,\cdot,0)dxdv\to\int_{\r^{2d}}f_{0}\phi(\cdot,\cdot,0)dxdv
\end{align*}
and the only problem is with the second term on the left-hand side of (\ref{weak}) i.e. the alignment force term
\begin{align}\label{ali2}
\int_0^T\int_{\r^{2d}}F_{n}(f_{n})f_{n}\nabla_v\phi dxdvdt.
\end{align}

{\bf Step 4.} To deal with the problem of convergence with the alignment force term we replace it in the following manner
\begin{align*}
\int_0^T\int_{\r^{2d}}f_n[\partial_t\phi + v\nabla\phi]dxdvdt + \int_0^T\int_{\r^{2d}}F_{m}(f_n)f_n\nabla_v\phi dxdvdt\\ = -\int_{\r^{2d}}f_{0,\frac{1}{n}}\phi(\cdot,\cdot,0)dxdv + {\mathcal J},
\end{align*}
where
\begin{align*}
{\mathcal J} := \int_0^T\int_{\r^{2d}}\left(F_{m}(f_n)-F_{n}(f_n)\right)f_n\nabla_v\phi dxdvdt
\end{align*}
for
\begin{align*}
F_{m}(f_n)(x,v,t):=\int_{\r^d\times\r^d}\psi_m(|x-y|)(w-v)f_n(y,w,t)dydw.
\end{align*}
However, as mentioned at the beginning of Section \ref{sec3}, instead of looking at (\ref{ali2}) as an integral of a product of $F_n(f_n)$ with $f_n$, we are going to see it as an integral of
\begin{align}\label{gn}
g_n(x,y,w,v):=\psi_n(|x-y|)(w-v)\nabla_v\phi(x,v,t)
\end{align}
with respect to the measure
\begin{align*}
d\mu_n(x,y,w,v,t):=f_n(x,v,t)\otimes f_n(y,w,t) dxdvdydw.
\end{align*}
By Fubini's theorem we have
\begin{align*}
&\int_0^T\int_{\r^{2d}}F_n(f_n)f_n\nabla_v\phi
 dxdvdt=\\
&=\int_0^T\int_{\r^{2d}}\left(\int_{\r^{2d}}\psi_n(|x-y|)(w-v)f(y,w,t)dydw\right)\nabla_v\phi(x,v,t)f(x,v,t)dxdvdt\\
&=\int_0^T\int_{\r^{4d}}g_nd\mu_ndt
\end{align*}
and a similar identity holds for $\int_0^T\int_{\r^{2d}}F_m(f_n)f_n\nabla_v\phi dxdvdt$. Therefore
\begin{align*}
{\mathcal J} = \int_0^T\int_{\r^{4d}}(g_m-g_n)d\mu_ndt.
\end{align*}
Moreover we have
\begin{align*}
g_m-g_n = 0
\end{align*}
in the set $\{(x,y,w,v):|x-y|>\max\{m^{-\frac{1}{\alpha}},n^{-\frac{1}{\alpha}}\}\}$, which provided that\footnote{Which we may assume since we are going to converge with $n\to\infty$ for each fixed $m$.} 
$n>m$ implies that
\begin{align}\label{foot1}
|g_m-g_n|\leq |g_n|\chi_{\{(x,y,w,v):|x-y|\leq m^{-\frac{1}{\alpha}}\}}.
\end{align}
Therefore for
\begin{align*}
A(m,n)&:=\left\{t: \int_{B(m,n)}|w-v|d\mu_n>m^{-\frac{1}{2}}\right\}, &
B(m,n)&:=\left\{(x,y,w,v):|x-y|\leq m^{-\frac{1}{\alpha}}\right\}
\end{align*}
we have
\begin{align*}
|{\mathcal J}|\leq C\left(\int_{A(m,n)}\int_{B(m,n)} |g_n|d\mu_n dt+ \int_{(A(m,n))^c}\int_{B(m,n)} |g_n|d\mu_ndt\right) =: I + II.
\end{align*}
Now if $|x-y|\leq m^{-\frac{1}{\alpha}}$ then $\psi_n(|x-y|)\geq \min\{m, n\}= m$ and for all $t\in A(m,n)$ we have
\begin{align*}
{\mathcal L_n}(t)&:=\int_{\r^{4d}}\psi_n(|x-y|)|w-v|d\mu_n \\
&\geq\int_{B(m,n)}\psi_n(|x-y|)|w-v|d\mu_n
\geq m\cdot\int_{B(m,n)}|w-v|d\mu_n> m^\frac{1}{2}. 
\end{align*}
Furthermore, integrating with respect to $d\mu_n$ reveals that
\begin{align*}
{\mathcal L_n}(t)=\sum_{i,j=1}^N\psi(|x_i^n(t)-x_j^n(t)|)|v_i^n(t)-v_j^n(t)|
\end{align*}
which by Proposition \ref{pop},$(iv)$ implies that the sequence ${\mathcal L_n}$ is uniformly bounded in $L^p([0,T])$ for some $p>1$ and thus -- it is uniformly integrable which further implies that
\begin{align}\label{ifin}
I\leq C\|\nabla_v\phi\|_\infty\int_{\{t:{\mathcal L_n}(t)>m^\frac{1}{2}\}}{\mathcal L}_n(t)dt\leq C(m)\|\nabla_v\phi\|_\infty\stackrel{m\to\infty}{\longrightarrow}0,
\end{align}
since $|{\mathcal L_n}(t)>m^\frac{1}{2}|\leq \frac{\|{\mathcal L_n}\|_{L^1}}{m^\frac{1}{2}}\to 0$ as $m\to\infty$.

To estimate $II$ we introduce the set $B_t(m,n)$ of those pairs $(i,j)$ such that $|x_{i}^n(t)-x_{j}^n(t)|\leq m^{-\frac{1}{\alpha}}$. Then by H\" older's inequality with exponent $q=\frac{1}{\theta}$, 
for some arbitrarily small $\theta>0$, we have
\begin{align}
II &\leq\|\nabla_v\phi\|_\infty\int_{(A(m,n))^c}\sum_{i,j\in B_t(m,n)}m_{i,n}m_{j,n} \psi_n(|x_{i}^n-x_{j}^n|)|v_{i}^n-v_{j}^n|dt\nonumber\\
&=\|\nabla_v\phi\|_\infty\int_{(A(m,n))^c}\sum_{i,j\in B_t(m,n)}(m_{i,n}m_{j,n})^{1-\theta} \psi_n(|x_{i}^n-x_{j}^n|)|v_{i}^n-v_{j}^n|^{1-\theta}\\
&\quad\quad\cdot (m_{i,n}m_{j,n})^{\theta}|v_{i}^n-v_{j}^n|^{\theta}dt\nonumber\\
&\leq \|\nabla_v\phi\|_\infty\left(\int_{(A(m,n))^c}\sum_{i,j\in B_t(m,n)}m_{i,n}m_{j,n} \psi_n^\frac{1}{1-\theta}(|x_{i}^n-x_{j}^n|)|v_{i}^n-v_{j}^n|dt\right)^{1-\theta} \nonumber\\ 
&\quad\quad\cdot \left(\int_{(A(m,n))^c}\sum_{i,j\in B_t(m,n)}m_{i,n}m_{j,n} |v_{i}^n-v_{j}^n|dt\right)^\theta\nonumber\\
&\leq \|\nabla_v\phi\|_\infty\left(\int_0^T\sum_{i,j=1}^{N_n}m_{i,n}m_{j,n} \psi_n^\frac{1}{1-\theta}(|x_{i}^n-x_{j}^n|)|v_{i}^n-v_{j}^n|dt\right)^{1-\theta}\\
&\quad\quad\cdot \left(\int_{(A(m,n))^c}\int_{B(m,n)}|w-v|d\mu_ndt\right)^\theta\nonumber\\
&\leq \|\nabla_v\phi\|_\infty\left(\int_0^T\sum_{i,j=1}^{N_n}m_{i,n}m_{j,n} \psi_n^\frac{1}{1-\theta}(|x_{i}^n-x_{j}^n|)|v_{i}^n-v_{j}^n|dt\right)^{1-\theta}\cdot \left(Tm^{-\frac{1}{2}}\right)^\theta.\label{iipom}
\end{align}
By Proposition \ref{pop}, $(iv)$ the first multiplicand on the right-hand side of (\ref{iipom}) is uniformly bounded, which implies that
\begin{align}\label{iifin}
II\leq C\|\nabla_v\phi\|_\infty\left(Tm^{-\frac{1}{2}}\right)^\theta \stackrel{m\to\infty}{\longrightarrow} 0.
\end{align}
Estimations (\ref{ifin}) and (\ref{iifin}) imply that
\begin{align*}
|{\mathcal J}|\leq C(m)\|\nabla_v\phi\|_\infty
\end{align*}
for some $n$-independent positive constant $C(m)$ such that $C(m)\to 0$ as $m\to\infty$.\\

{\bf Step 5.} 
Our next goal is to ensure that the convergence
\begin{align}\label{conval}
\int_0^T\int_{\r^{2d}}F_{m}(f_n)f_n\nabla_v\phi dxdvdt\to \int_0^T\int_{\r^{2d}}F_{m}(f)f\nabla_v\phi dxdvdt
\end{align}
holds for each $m$ and each $\phi\in{\mathcal G}$. Let us fix $\phi\in{\mathcal G}$ and $m=1,2,...$ . For $g_m$ defined in (\ref{gn}), we have
{\small
\begin{align}\label{foot2}
\left|\int_0^T\int_{\r^{2d}}F_{m}(f_n)f_n\nabla_v\phi dxdvdt-\int_0^T\int_{\r^{2d}}F_{m}(f)f\nabla_v\phi dxdvdt\right|=\left|\int_0^T\int_{\r^{4d}}g_m(d\mu_n-d\mu)dt\right|\nonumber\\
\leq
 \left|\int_0^T\int_{\r^{4d}}g_m[d(f_n\otimes f_n)-d(f_n\otimes f)]dt\right| + \left|\int_0^T\int_{\r^{4d}}g_m[d(f_n\otimes f)-d(f\otimes f)]dt\right| =: I+II.
\end{align}
}
Furthermore, again by Fubini's theorem
\begin{align*}
I = \left|\int_0^T\int_{\r^{2d}}\left(\int_{\r^{2d}}g_m(df_n-df)\right)df_ndt\right|
\end{align*}
and since for each $x,v$ the function $(y,w)\mapsto g_m(x,y,v,w)$ is Lipschitz continuous and bounded with $Lip(g_m)+\|g_m\|_\infty \leq C_1$ for some $C_1=C_1(m,\|\nabla_v\phi\|_\infty,Lip(\nabla_v\phi))$ 
then by Lemma \ref{bldist} we have
\begin{align*}
I\leq C_1\int_0^T\int_{\r^{2d}}d(f_n,f)df_n\leq C_1T\|d(f_n,f)\|_\infty\to 0\ \ \ {\rm as}\ \ \ n\to\infty.
\end{align*}
Similarly also $II\to 0$ with $n\to\infty$. This concludes the proof of convergence (\ref{conval}).\\

{\bf Step 6.}
At this point after converging with $n$ to infinity we are left with the weak formulation for $f$ that reads as follows:
\begin{align*}
\int_0^T\int_{\r^{2d}}f[\partial_t\phi + v\nabla\phi]dxdvdt &+ \int_0^T\int_{\r^{2d}}F_{m}(f)f\nabla_v\phi dxdvdt\\ &= -\int_{\r^{2d}}f_{0}\phi(\cdot,\cdot,0)dxdv + {\mathcal J}(m)
\end{align*}
for all $m=1,2,...$ and all $\phi\in {\mathcal G}$ with
\begin{align*}
{\mathcal J}(m)\to 0\ \ \ {\rm as}\ \ \ m\to\infty.
\end{align*}
Therefore it suffices to show that
\begin{align}\label{poem}
\int_0^T\int_{\r^{2d}}F_{m}(f)f\nabla_v\phi dxdvdt\to \int_0^T\int_{\r^{2d}}F(f)f\nabla_v\phi dxdvdt.
\end{align}
By Fubini's theorem for
\begin{align*}
d\mu &=(f\otimes f)(x,v,y,w,t)dxdvdydw,\\
g_m &=\psi_{m}(|x-y|)(w-v)\nabla_v\phi,\qquad
g =\psi(|x-y|)(w-v)\nabla_v\phi,
\end{align*}
we have
\begin{align}\label{fubub}
\int_0^T\int_{\r^{2d}}F_{m}(f)f\nabla_v\phi dxdvdt &= \int_0^T\int_{\r^{2d}}g_m d\mu dt,\nonumber\\
\int_0^T\int_{\r^{2d}}F(f)f\nabla_v\phi dxdvdt &= \int_0^T\int_{\r^{2d}} gd\mu dt
\end{align}
provided that the integral on the right-hand side of (\ref{fubub}) is well defined. Therefore to show (\ref{poem}) it suffices to prove that
$g_m\to g$
in $L^1$ with respect to measure $d\mu$. To prove this we first show that
$g_m\to g$
a.e. with respect to the measure $d\mu$. Clearly the convergence holds on
\begin{align*}
A:=\{(x,v,y,w,t): x\neq y\}\cup \{(x,v,y,w,t): x=y, v=w\}
\end{align*}
and it suffices to show that the set $A^c=\{(x,v,y,w,t): x=y, v\neq w\}$ is of measure $d\mu$ zero. We have $\psi_m\equiv m$ on $A^c$ and thus
\begin{multline*}
I_m:= \int_0^T\int_{\r^{4d}}|g_m|d\mu dt =  \int_0^T\int_{\r^{4d}}\psi_{m}(|x-y|)|w-v||\nabla_v \phi|d\mu dt \\
\geq \int_{A^c} \psi_{m}(|x-y|)|w-v||\nabla_v\phi| d\mu dt= \int_{A^c} m|w-v||\nabla_v\phi|d\mu dt= m\int_{A^c} |w-v||\nabla_v\phi|d\mu dt.
\end{multline*} 
Thus either
\begin{align}\label{alter}
I_m\to\infty\ \ \  {\rm or}\ \ \ \int_{A^c}|w-v||\nabla_v\phi|d\mu = 0.
\end{align}
The proofs of Step 4 and Step 5 remain true if we substitute $g_m$ and $g_n$ with $|g_m|$ and $|g_n|$ respectively\footnote{Indeed, since $||g_m|-|g_n||\leq|g_m-g_n|$ we may replace in 
(\ref{foot1}) $g_m$ and $g_n$ with $|g_m|$ and $|g_n|$ and proceed with the proof in the same way as in Step 4. On the other hand in Step 5, the convergence of $I$ and $II$ from (\ref{foot2}) was a
 result of that $\|d(f_n,f)\|_\infty\to 0$ and that $g_m$ is a Lipschitz continuous function, which remains true for $|g_m|$.}. Therefore also the respective convergences hold for $|g_m|$ and $|g_n|$, yielding
\begin{align}\label{j1}
\left|\int_0^T\int_{\r^{4d}}|g_m|d\mu_ndt - \int_0^T\int_{\r^{4d}}|g_n|d\mu_n dt\right|\leq C(m)\|\nabla_v\phi\|_\infty\stackrel{m\to\infty}{\longrightarrow} 0
\end{align}
and
\begin{align}\label{j2}
\left|\int_0^T\int_{\r^{4d}}|g_m|d\mu_ndt - \int_0^T\int_{\r^{4d}}|g_m|d\mu dt\right|\stackrel{n\to\infty}{\longrightarrow}0.
\end{align} 
Moreover for each $m$ and $n$, we have
\begin{align*}
I_m &\leq \left|\int_0^T\int_{\r^{4d}}|g_m|d\mu dt - \int_0^T\int_{\r^{4d}}|g_m|d\mu_ndt\right|\\
 &+ \left|\int_0^T\int_{\r^{4d}}|g_m|d\mu_ndt-\int_0^T\int_{\r^{4d}}|g_n|d\mu_ndt\right|
 + \int_0^T\int_{\r^{4d}}|g_n|d\mu_ndt.
\end{align*}
Now, (\ref{j2}) implies that for each $m$ we may choose $n$ big enough, so that
\begin{align*}
\left|\int_0^T\int_{\r^{4d}}|g_m|d\mu dt - \int_0^T\int_{\r^{4d}}|g_m|d\mu_ndt\right|\leq 1.
\end{align*}
Furthermore, by (\ref{j1}) for such $n$ we have
\begin{align*}
\left|\int_0^T\int_{\r^{4d}}|g_m|d\mu_ndt-\int_0^T\int_{\r^{4d}}|g_n|d\mu_ndt\right|\leq |{\mathcal J}(m)|
\end{align*}
and finally by estimation $(iii)$ from Proposition \ref{pop}
\begin{align*}
\int_0^T\int_{\r^{4d}}|g_n|d\mu_ndt &\leq 
\|\nabla_v\phi\|_\infty \int_0^T\int_{\r^{4d}}\psi_n(|x-y|)|w-v|d\mu_ndt\\
&= \|\nabla_v\phi\|_\infty \int_0^T\sum_{i,j=1}^{N_n}m^n_im^n_j\psi_n(|x_i^n-x_j^n|)|v_j^n-v_i^n|dt\leq M
\end{align*}
and thus
\begin{align}\label{ex}
I_m\leq 1+|{\mathcal J}(m)|+M\leq C_2
\end{align}
for some positive constant $C_2$.
Therefore (\ref{alter}) and (\ref{ex}) imply that $\int_{A^c}|w-v||\nabla_v\phi| d\mu = 0$ and since the function $|w-v|$ is positive on $A^c$, then by a standard density argument $A^c$ is
 of measure $\mu$ zero and we have proved that
\begin{align*}
\psi_{m}(|x-y|)(w-v)\nabla_v\phi\to\psi(|x-y|)(w-v)\nabla_v\phi \mbox{ \ $\mu$-a.e.},\\
\psi_{m}(|x-y|)|w-v||\nabla_v\phi|\to\psi(|x-y|)|w-v||\nabla_v\phi| \mbox{  \ $\mu$-a.e.}.
\end{align*}
 Moreover by Fatou's lemma
\begin{align}\label{p4}
\int_0^T\int_{\r^{2d}}\psi(|x-y|)|w-v||\nabla_v\phi|d\mu dt &\leq \liminf_{m\to\infty} \int_0^T\int_{\r^{2d}}\psi_{m}(|x-y|)|w-v||\nabla_v\phi|d\mu dt\nonumber\\
&= \liminf_{m\to\infty} I_m \leq C_2.
\end{align}
Therefore the function $(x,y,v,w,t)\mapsto \psi(|x-y|)|w-v||\nabla_v\phi|$ belongs to $L^1(d\mu)$. This function is a proper dominating function for $\psi_{m}(|x-y|)(w-v)\nabla_v\phi$ and
 by the dominated convergence theorem we have (\ref{poem}) and the proof of step 6 is finished. 

{\bf Step 7.}
Let us now wrap up the proof and compare Definition \ref{weakdef} with what we were able to prove about $f$. We took an arbitrary initial data $f_0\in {\mathcal M}_+$ and proved existence of 
$f\in L^\infty(0,T;{\mathcal M}_+)$. Moreover in step 2 using estimates $(ii)$ and $(v)$ from Proposition \ref{pop} we proved that actually ${\rm supp}f\subset B({\mathcal R})$ and (point 1 
of Definition \ref{weakdef}). Point 2 of Definition \ref{weakdef} is an immediate consequence of $(ii)$ from Proposition \ref{pop}, while point 3 was the main focus of all the steps of the 
proof and it was finally proved in step 6. Point 4 of Definition \ref{weakdef} follows from (\ref{p4}) and Fubini's theorem. As a consequence of the weak formulation for $f$ we conclude that 
also $\partial_tf\in L^{p}(0,T;(C^1(B({\mathcal R})))^*)$. We are left with point 5 of Definition \ref{weakdef}. Suppose that $B(R)$ and $B(r)$ are two concentric balls, such that (\ref{ccentr}) 
is satisfied. Then the construction of $f_
 {0,n}$ ensures that
\begin{align*}
{\rm supp}f_{0,n} \cap {B\left(R-\frac{1}{n}\right)}\subset B\left(r+\frac{1}{n}\right)
\end{align*}
and for sufficiently large $n$ we have $r+\frac{1}{n}<r+\frac{R-r}{8}<R-\frac{R-r}{8}$. Translating it according to $(\ref{disc})$ we write that in the set ${\mathcal I}$ of those $i$ that 
$(x^n_{0,i},v^n_{0,i})\in B(R-\frac{R-r}{8})$ we actually have $(x^n_{0,i},v^n_{0,i})\in B(r+\frac{R-r}{8})$. By $(ii)$ and $(iii)$ from Proposition \ref{pop} (and in particular by Remark 
\ref{xto0}), for each $i\in {\mathcal I}$ and for each sufficiently big $n$, we have the $n$ independent bounds:
\begin{align*}
|x^n_i(t)|\leq |x^n_{0,i}| + t{\mathcal R}\stackrel{t\to 0}{\longrightarrow} |x^n_{0,i}|,\qquad
|v^n_i(t)|\leq |v^n_{0,i}| + \omega(t)\stackrel{t\to 0}{\longrightarrow} |v^n_{0,i}|.
\end{align*}
The above bounds, for sufficiently small $t$ imply that $(x^n_{i}(t),v^n_{i}(t))\in B(r+\frac{R-r}{6})$ as long as $i\in{\mathcal I}$. Similarly for $i\notin{\mathcal I}$ in a sufficiently small
 neighborhood of $t=0$, we have $(x^n_{i}(t),v^n_{i}(t))\notin B(R-\frac{R-r}{6})$. Therefore
\begin{align*}
{\rm supp}f_n(t)\cap B\left(R-\frac{R-r}{6}\right)\subset B\left(r+\frac{R-r}{6}\right)
\end{align*}
for sufficiently large $n$ and sufficiently small $t$. Thus we may pass to the limit with $n\to\infty$ to obtain (\ref{propag}). 
 This finishes the proof of the existence part of Theorem \ref{main3}.

\section{Proof of Theorem \ref{main3} (weak-atomic uniqueness)}\label{sec5}
In what follows we aim at proving that if initial configuration $f_0$ is an atomic measure, i.e. it satisfies (\ref{discini}), then  solution $f$ in the sense of Definition \ref{weakdef} is of the form (\ref{remeq}),
 and it is unique. We will base the proof on a very careful analysis of the local propagation of the support of $f$ that comes from point 5 of Definition \ref{weakdef}. What, we basically need, is that any
 amount of the mass $f$ that is separated from the rest of the mass remains separated at least for some time. 
It is required  to refine this property by adding a control over the shape in which the support in the $x$ and $v$ coordinates propagates. 
The difficulty comes from the fact that in the case of the particle system  the position 
$x_i$ of $i$th particle changes with its own unique velocity $v_i$. However  in the case of the kinetic equation
  characteristics are not  well defined. 


{\bf Step 1.}
By  point 1 in Definition \ref{weakdef} it is sufficient to prove the proposition only in an arbitrarily small neighborhood of $t=0$. Let $f_0$ be of the form (\ref{discini}). 
Our first task is to restrict $f_0$ to small balls with one particle (say $i$th particle) in $\r^{2d}$. Then we will use the local propagation of the support to prove that the mass that initially formed the
 $i$th particle remains atomic in some right-sided neighborhood of $t=0$. Since
\begin{align}\label{id5}
f_0=\sum_{i=1}^N m_i\delta_{x_{0,i}}\otimes\delta_{v_{0,i}}
\end{align}
for  number of atoms $N$, we have a finite number of initial positions and velocities of the particles $(x_{0,i},v_{0,i})$ for $i=1,...,N$, which implies that there exists $R_1>0$ such that for all 
$r_0<R_1$, we have
\begin{align}\label{oddziel}
f_0|_{B_i(r_0)} = m_i\delta_{x_{0,i}}\otimes\delta_{v_{0,i}}
\end{align}
for $B_i(R):=B_{x,v}((x_{0,i},v_{0,i}),r_0)$. 

At this point let us concentrate on one  atom,  we fix ${i}$. We aim at  showing that there exists $T^*$ such that 
\begin{align}\label{fd}
f^D:=f|_{B_i(\frac{r_0}{4}) } = m_i\delta_{x_i(t)}\otimes\delta_{v_i(t)}
\end{align}
in $[0,T^*]$ for some $\r^d$ valued functions $x_i$ and $v_i$. We emphasize that $r_0$ and $T^*(r_0)$ can be chosen to be arbitrarily small.
 Identity (\ref{oddziel}) implies that for any $0<r<r_0$, we have
\begin{align*}
{\rm supp}f_0\cap B_i(r_0)\subset B_i(r)
\end{align*}
which by point 5 of Definition \ref{weakdef} ensures that there exists $T^*$ such that
\begin{align}\label{sepa}
{\rm dist}\{{\rm supp}f^D(t),{\rm supp}f^C(t)\}>\frac{r_0}{8}
\end{align}
for all $t\in[0,T^*]$,
where $f^C(t):=f(t)-f^D(t)$.
Then one can find a smooth function $\eta:\r^{2d} \times [0,T_*] \to [0,1]$ such that $\eta\equiv 1$ over the support of $f^D$ and $\eta\equiv 0$ over the support of $f^C$. We have then $f^D\eta=f^D$. 
All these properties allow to state the following equation satisfied by $f^D$ on $[0,T^*]$:
\begin{align}\label{weakfd}
\partial_t f^D + v\cdot\nabla_xf^D + \dv[(F(f^C)+F(f^D))f^D] = 0.
\end{align}
This equation is satisfied in the same sense that (\ref{weak}) from Definition \ref{weakdef}. To prove that $f^D$ is indeed of form (\ref{fd}) we introduce
\begin{align}\label{xva}
\left\{
\begin{array}{rcl}
\frac{d}{dt}x_a(t)&=&v_a(t)\\
\frac{d}{dt}v_a(t)&=&\displaystyle \int_{\r^{2d}}\psi(|x_a(t)-y|)(w-v_a(t))f^C\,dydw
\end{array}
\right.
\end{align}
with the initial data $(x_a(0),v_a(0))=(x_{0,i},v_{0,i})$. Condition (\ref{sepa}) ensures that the right-hand side of $(\ref{xva})_2$ is 
smooth and thus (\ref{xva}) has exactly one smooth solution in $[0,T^*]$. Our goal is to show that $f^D$ is supported on the curve $(x_a(t),v_a(t))$ 
and that in fact (\ref{fd}) holds with $(x_i(t),v_i(t))\equiv(x_a,v_a)$. Since this feature will hold for all atoms, the whole $f$ will then be atomic.


\smallskip 


{\bf Step 2.} In the next step we characterize possible evolution of  the support of the weak solution to (\ref{weakfd}).

\begin{lem}\label{excon}
Let $f$ be a weak solution to (\ref{cscont}) in the sense of Definition \ref{weakdef}. Assume further that $f$ has the structure of $f=f^D+f^C$ and fulfills the weak formulation of (\ref{weakfd}), and
\begin{align*}
{\rm supp}f_0^D=(x_0,v_0)
\end{align*}
for some given $(x_0,v_0)$. Then for any $R>0$ there exists $T^*$, such that
\begin{align*}
{\rm supp}f^D(t)\subset (x_0,v_0) + (tB_x(v_0,\epsilon))\times B_v(0,R)
\end{align*}
for all $t\in[0,T^*]$, with $\epsilon:=\sqrt{2R(R+|v_0|)}$, which can be arbitrarily small depending on smallness of $R$.
\end{lem}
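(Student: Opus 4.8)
\textbf{Proof plan for Lemma \ref{excon}.}

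The plan is to test the weak formulation of \eqref{weakfd} against cleverly chosen test functions that localize on the complement of the claimed support region, and show that the corresponding integrals vanish, forcing $f^D(t)$ to be concentrated where we want. More precisely, I will work with two separate ``barrier'' functions: one controlling the $v$-coordinate and one controlling the $x$-coordinate. For the velocity, note that on the support of $f^D$ the effective force is $F(f^C)+F(f^D)$; the $F(f^D)$ part is an alignment term that cannot push velocities outside their own convex hull, and the $F(f^C)$ part is bounded (by \eqref{sepa}, $\psi$ is evaluated away from its singularity, so $|F(f^C)|\le C$ on ${\rm supp}f^D$). Hence velocities starting at $v_0$ cannot leave $B_v(v_0,R/2)\subset B_v(0,R)$ before a time $T^*_1$ comparable to $R$. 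This is made rigorous by testing \eqref{weakfd} with $\phi(x,v,t)=\chi(|v-v_0|)$ for a smooth increasing $\chi$ vanishing near $0$; the $v\cdot\nabla_x\phi$ term drops out, and the $\nabla_v\phi$ term is controlled using boundedness of the force on the relevant region together with Proposition \ref{pop}-type integrability — or, cleanly, by first establishing the bound on an approximating atomic level and passing to the limit, mirroring Step 7 of the existence proof.

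For the $x$-coordinate, once we know $v$ stays in $B_v(0,R)$ (so in particular $|v|\le |v_0|+R$ and $|v-v_0|\le R$ on ${\rm supp}f^D$), we integrate: a point of the support moves with velocity in $B_v(v_0,R)$, so after time $t$ its displacement lies in $tB_x(v_0,R)$... but the sharper claim with $\epsilon=\sqrt{2R(R+|v_0|)}$ requires noticing that the velocity is not merely in $B_v(v_0,R)$ but starts exactly at $v_0$ and drifts at rate $|\dot v|\le C$; quantitatively, $|x(t)-x_0-tv_0|\le \int_0^t|v(s)-v_0|\,ds$ and, since $|v(s)-v_0|\le s\sup|\dot v|$, this is $\le \tfrac12 t^2\sup|\dot v|$, which is $\le t\epsilon$ precisely when $t$ is small and $\epsilon^2\sim R(R+|v_0|)$ (the $\sup|\dot v|$ being $O(R+|v_0|)$ from the alignment plus bounded-$F(f^C)$ estimate, and the relevant time being $O(R/(R+|v_0|))$). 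So I would: (i) test with a barrier in $v$ to get the velocity confinement on $[0,T^*_1]$; (ii) feed this into a barrier test function in $x$ of the form $\phi(x,v,t)=\zeta\big((x-x_0-tv_0)/t\big)$ or a time-dependent mollified distance, whose material derivative $\partial_t\phi+v\cdot\nabla_x\phi$ becomes $\nabla\zeta\cdot\big((v-v_0)/t - (x-x_0-tv_0)/t^2\big)$, and estimate using the velocity bound just obtained; (iii) take $T^*=\min\{T^*_1,T^*_2\}$ and read off the stated inclusion.

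The main obstacle is the rigorous handling of the barrier-testing argument for the kinetic equation: the natural test functions (indicators of half-spaces, distance functions) are not in $\mathcal{G}$, and the $x$-barrier depends singularly on $t$ near $t=0$, so one must mollify carefully and justify that the error terms — in particular the contribution of $F(f^D)f^D$, which is only a measure times an $L^1_{loc}$-in-the-product-measure function — are controlled. I expect to circumvent this exactly as in the existence proof: prove the confinement first for the approximating atomic solutions $f^D_n$ (where everything is a genuine ODE flow and the estimates above are elementary), obtaining $n$-independent times $T^*_1, T^*_2$, and then pass to the limit using $\|d(f^D_n,f^D)\|_\infty\to 0$ and lower semicontinuity of the support under bounded-Lipschitz convergence. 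The only genuinely delicate quantitative point is pinning down the constant $\epsilon=\sqrt{2R(R+|v_0|)}$, which comes from balancing the $O((R+|v_0|)t)$ velocity drift against the confinement time $O(R/(R+|v_0|))$ as indicated above.
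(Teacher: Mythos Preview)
Your fallback strategy---prove the confinement for approximating atomic solutions $f^D_n$ and pass to the limit---is a genuine gap. Lemma~\ref{excon} is invoked in the uniqueness argument of Section~\ref{sec5}, where $f$ is an \emph{arbitrary} weak solution in the sense of Definition~\ref{weakdef}, not the particular solution built in Section~\ref{sec4}. For such $f$ there is no approximating sequence $f_n$ available (indeed, producing one would amount to already knowing uniqueness). So the ``clean'' route you propose to circumvent the regularity issues of the barrier tests is not admissible here, and you must make the direct testing argument work at the level of the weak formulation itself.

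The paper does exactly this, but organizes it differently from your plan. First, the velocity confinement $|v-v_0|\le R$ is not obtained by testing at all: it is read off directly from point~5 of Definition~\ref{weakdef}, which is part of the hypotheses on weak solutions and gives ${\rm supp}f^D(t)\subset B((x_0,v_0),R)$ on $[0,T^*]$ for any prescribed $R>0$. Second, with this in hand the paper proves an auxiliary result (Lemma~\ref{conelem}) by testing \eqref{weakfd} against $\phi^2$ with $\phi(x,t)=((\rho-Rt)^2-|x-x_1|^2)_+$; this yields the cruder cone $x\in x_0+tB_x(v_0,R)$. Finally, the sharp conclusion is obtained by a second test with $\phi(x,t)=((x-v_0t)^2-(t\epsilon)^2)_+$: the material derivative produces $t\epsilon^2-(v-v_0)\cdot(x-v_0t)$, and using $|v-v_0|\le R$ together with the bound $|x-v_0t|\le t(2|v_0|+R)$ from Lemma~\ref{conelem} one sees this is strictly positive once $\epsilon^2>R(2|v_0|+R)$, forcing $f^D\phi\equiv 0$. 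This is where the value $\epsilon=\sqrt{2R(R+|v_0|)}$ comes from---an algebraic condition from the test-function inequality, not from your ODE-style estimate $|x-x_0-tv_0|\le\tfrac12 t^2\sup|\dot v|$, which has no direct meaning for a measure-valued weak solution without characteristics. Your proposed test function $\zeta((x-x_0-tv_0)/t)$ is in the right spirit, but the explicit quadratic choice and the two-step (Lemma~\ref{conelem} then Lemma~\ref{excon}) structure are what make the argument close.
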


To prove Lemma \ref{excon} it is required to show the following result.

\begin{lem}\label{conelem}
Let $f^D$ be a weak solution to (\ref{weakfd}) in the sense of Definition \ref{weakdef}. Assume further that there exists $T^*$, such that
\begin{align}\label{cone-1}
{\rm supp}f^D(t)\subset B((x_0,v_0),R)
\end{align}
for some given $(x_0,v_0)$ and $R>0$ and all $t\in[0,T^*]$. Then
\begin{align}\label{cone}
{\rm supp}f^D(t)\subset {\rm supp}f_0^D + tB_x(v_0,R)\times B_v(0,R). \end{align} It means that the support in the $x$-coordinates propagates in a cone defined by the ball $B_x(v_0,R)$ in direction $v_0$.
\end{lem}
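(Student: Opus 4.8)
The plan is to test the weak formulation of \eqref{weakfd} with functions that detect how far the support of $f^D$ has drifted in the $x$-variable relative to the straight-line motion $x_0 + t v_0$, and to use the assumed a priori confinement \eqref{cone-1} to control the velocity variable. Concretely, I would work with the ``shifted'' position variable $z := x - x_0 - t v_0$ and show that on $\mathrm{supp} f^D(t)$ one has $z \in t\,B_x(0,R)$, i.e. $|x - x_0 - tv_0| \le tR$; together with the confinement $v \in B_v(v_0,R)$ already supplied by \eqref{cone-1}, this is exactly \eqref{cone}. To detect this, fix a direction $e \in \r^d$ with $|e|=1$ and a threshold, and build a smooth test function $\phi(x,v,t) = \chi\big( e\cdot(x - x_0 - t v_0) - \rho(t)\big)\,\zeta(v)$, where $\chi$ is a smooth nondecreasing cutoff vanishing on $(-\infty,0]$ and equal to $1$ on $[1,\infty)$, $\rho(t)$ is a small linear-in-$t$ ``collar'' to be chosen of slope slightly above $R$, and $\zeta$ is a spatial cutoff equal to $1$ on the (bounded) region where $f^D$ lives, so that all the admissibility requirements of $\mathcal G$ in Definition \ref{weakdef} are met and the $v$-derivative terms contribute harmlessly.

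The key computation is that, for such $\phi$,
\begin{align*}
\partial_t\phi + v\cdot\nabla_x\phi = \chi'\big(e\cdot(x-x_0-tv_0)-\rho(t)\big)\,\big(e\cdot(v - v_0) - \rho'(t)\big)\,\zeta(v),
\end{align*}
and on $\mathrm{supp}\,f^D(t)$ we have $|v - v_0|\le R$ by \eqref{cone-1}, hence $e\cdot(v-v_0)\le R < \rho'(t)$ if we pick $\rho'(t)\equiv R+\delta$; since $\chi'\ge 0$, this term is $\le 0$ wherever $f^D$ is supported. The alignment term $\dv[(F(f^C)+F(f^D))f^D]$ produces $\nabla_v\phi$ against a force field; because $\nabla_v\phi$ involves $\zeta'(v)$ which vanishes on $\mathrm{supp}\,f^D(t)$, that contribution vanishes. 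Plugging into the boundary-inclusive weak identity \eqref{ugh}, and using that $\phi(\cdot,\cdot,0)=0$ on $\mathrm{supp}\,f_0^D=(x_0,v_0)$ provided $\rho(0)>0$, one obtains
\begin{align*}
\int_{\r^{2d}} \phi(x,v,t)\, f^D(t)\,dx\,dv \le 0
\end{align*}
for all $t \in [0,T^*]$, and since $\phi\ge 0$ this forces $f^D(t)$ to charge no point with $e\cdot(x-x_0-tv_0) > \rho(t) = (R+\delta)t + \rho(0)$. Letting $\rho(0)\downarrow 0$ and then $\delta\downarrow 0$, and ranging over all unit vectors $e$, gives $e\cdot(x - x_0 - tv_0)\le Rt$ for every $e$, i.e. $x - x_0 - tv_0 \in \overline{B_x(0,Rt)}$, which rewritten is $x \in x_0 + tv_0 + tB_x(0,R) = x_0 + tB_x(v_0,R)$. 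Combined with the $v$-confinement this is \eqref{cone}, noting $\mathrm{supp}\,f_0^D = \{(x_0,v_0)\}$ so the $x_0$ can be absorbed into $\mathrm{supp}\,f_0^D$ in the statement.

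The main obstacle I anticipate is the rigorous handling of the test-function admissibility and the boundary term: the function $\phi$ above is $C^1$ with bounded Lipschitz derivatives only after one smooths $\chi$ and truncates in $v$ via $\zeta$, and one must check that the remark following Definition \ref{weakdef} (allowing $C^1$ test functions and time-endpoint versions of \eqref{weak}, i.e. \eqref{ugh}) genuinely applies here — in particular that $\mathrm{supp}\,f^D(t)$ stays inside the region where $\zeta\equiv 1$, which is precisely what \eqref{cone-1} guarantees on $[0,T^*]$. A secondary subtlety is that the argument as stated proves a closed-ball inclusion $tB_x(v_0,R)$ with $\overline{B}$; if the lemma intends the open ball one instead runs the slope at $R$ exactly with $\rho(0)>0$ a free small parameter and takes a limit, or simply notes the statement \eqref{cone} is to be read with closed balls, which is harmless for the downstream use in Lemma \ref{excon}. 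Everything else — the sign of $\chi'$, the vanishing of the $\zeta'(v)$ term, the monotonicity-in-$t$ conclusion — is routine once the test function is set up.
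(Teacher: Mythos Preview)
Your argument is correct and rests on the same mechanism as the paper's proof---finite propagation speed in $x$ forced by the velocity confinement $|v-v_0|\le R$ from \eqref{cone-1}, detected via a nonnegative test function with $\nabla_v\phi$ vanishing on the support so that the alignment term drops out. The implementations differ, though. The paper tests with $\phi^2$ for $\phi(x,t)=((\rho-Rt)^2-|x-x_1|^2)_+$, a shrinking ball centered at an arbitrary point $x_1$ with $B_x(x_1,\rho)$ disjoint from the $x$-projection of $\mathrm{supp}\,f_0^D$; this is purely $x$-dependent (so $\nabla_v\phi=0$ automatically, with no need for your cutoff $\zeta$) and it works for \emph{any} initial support, not just a single atom. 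Your moving half-space test function $\chi(e\cdot(x-x_0-tv_0)-\rho(t))$ is cleaner and more direct, but it only recovers the convex hull of $\mathrm{supp}\,f_0^D$ dilated by $tB_x(v_0,R)$: ranging over all directions $e$ gives an intersection of half-spaces. Since you explicitly invoke $\mathrm{supp}\,f_0^D=\{(x_0,v_0)\}$ at the end, this is harmless here---and indeed that is the only case used downstream in Lemma~\ref{excon}---but it does not prove the lemma in the generality in which it is stated. Two minor cleanups: you can drop $\zeta(v)$ entirely (your $\chi$ term alone is bounded with bounded Lipschitz derivatives and has $\nabla_v\phi=0$), and the closed-versus-open ball issue you flag is immaterial, as the paper's own argument also yields the closed ball.
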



\begin{proof}[Proof of Lemma \ref{conelem}]
Without a loss of generality we assume that $(x_0,v_0)=(0,0)$. The boundedness of the support in the $v$-coordinates is trivial and thus we focus  on the support in the $x$-coordinates.
 Suppose that $x_1\in \r^d$ and $\rho>0$ are such that
\begin{align*}
{\rm supp}f_0^D\cap B_x(x_1,\rho)\times\r^d = \emptyset
\mbox{ \ \ and let \ \ }
\phi(x,t):=((\rho-Rt)^2-|x-x_1|^2)_+.
\end{align*}
Hence
\begin{align}
 {\rm supp}\, \phi(\cdot,t) = \{ |x-x_1| \leq |\rho - Rt|\}.
\end{align}
We test (\ref{weakfd}) by $\phi^2$ and integrate over the time interval $[0,T^*]$, obtaining
\begin{align*}
\int_{\r^{2d}}f^D(T^*)\phi(T^*)^2dxdv + 4\int_0^{T^*}\int_{\r^{2d}} f^D\phi[(\rho-Rt)R-(x-x_1)v]dxdvdt =\\ = \int_{\r^{2d}}f_0^D\phi(0)^2dxdv = 0.
\end{align*}
Since the first term on the left-hand side of the above equality is nonnegative, we have
\begin{align*}
\int_0^{T^*}\int_{\r^{2d}} f^D\phi[(\rho-Rt)R-(x-x_1)v]dxdvdt\leq 0.
\end{align*}
But for the interior of  the support of $\phi$, we have $\rho-Rt > |x-x_1|$ and by (\ref{cone-1}) $R
 >|v|$. It implies that
\begin{align*}
0< (\rho-Rt)R-(x-x_1)v, \mbox{ \  \ and\ hence \ } f\phi\equiv 0.
\end{align*}
This way we proved that in the complement of the support in $x$ of $f(t)$ lay all the balls centered outside of ${\rm supp}f_0$ and with a radius equals to $\rho-Rt$, which implies (\ref{cone}).
\end{proof}


\begin{proof}[Proof of Lemma \ref{excon}]
We base the proof on Lemma \ref{conelem}. First we  establish proper $R$ and $T^*$. Since $f_0^D$ is concentrated in one point $(x_0,v_0)$ then for arbitrarily small $\rho$ 
\begin{align*}
{\rm supp}\,f_0^D\subset B((x_0,v_0),\rho).
\end{align*}
Now, Definition \ref{weakdef} point 5 ensures that there exist $R(\rho)$ and $T^*(\rho)$ such that
\begin{align*}
{\rm supp}f^D(t)\subset B((x_0,v_0),R)
\end{align*}
in $[0,T^*]$ and $R$ can be chosen  arbitrarily small (then also $T^*$ is small but still positive). We fix such $R$ and $T^*$ and note that we may apply Lemma \ref{conelem} on $[0,T^*]$. 
Without a loss of generality we assume that $x_0=0$ and test (\ref{weakfd}) with the function $\phi^2$, where
\begin{align*}
\phi(x,t):=((x-v_0t)^2-(t\epsilon)^2)_+
\end{align*}
and
\begin{equation}
 {\rm supp}\, \phi(\cdot, t)=\{ x\in \r^d: |x-v_0 t|\geq t\epsilon \}.
\end{equation}
By (\ref{ugh}), we have
\begin{align}\label{u}
0 &=\int_{\r^{2d}}f^D(t)\phi^2(t)dxdv - 4\int_0^t\int_{\r^{2d}}f^D \phi [-v_0(x-v_0t)-t\epsilon^2 + v(x-v_0t)]dxdvdt\nonumber\\
&\geq 4\int_0^t\int_{\r^{2d}}f^D \phi [t\epsilon^2 - (v-v_0)(x-v_0t)]dxdvdt.
\end{align}
On the support of $f^D$, we have $|v-v_0|\leq R$ and by Lemma \ref{conelem} it holds 
\begin{align*}
|x-v_0t|\leq |x-\underbrace{x_0}_{=0}|+ |v_0|t\leq t(|v_0|+R) + t|v_0| \leq t (2|v_0|+R).
\end{align*}
Hence, in view of definition of $\epsilon$, we conclude
\begin{align*}
(v-v_0)(x-v_0t)\leq (2|v_0|+R)R\, t < t \epsilon^2.
\end{align*}
Therefore the integrand on the right-hand side of (\ref{u}) is nonnegative, which means that it has to be equal to $0$, which further implies that
\begin{align*}
f^D\phi \equiv 0 \mbox{ \ \ in \  }[0,T^*].
\end{align*}
By the definition of $\phi$ it follows that $f^D(t)$ vanishes outside of the cone balls $tB_x(v_0,\epsilon)\times\r^d$. The lemma is proved.

\end{proof}

\smallskip

{\bf Step 3.} In this part we show that $f$ initiated by a state of (\ref{id5}) stays indeed atomic for all time.

\begin{prop}
Let $f$ be a solution to (\ref{weakfd}) in the sense of Definition \ref{weakdef}. Then if $f_0$ is of  form (\ref{discini}) then $f$ is an atomic solution (of  form (\ref{remeq}))
 and it is unique.
\end{prop}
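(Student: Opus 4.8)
The plan is to analyse, in the localised setting of Step~1, the component $f^D$ born from a single atom, to prove it remains a Dirac mass travelling along the smooth curve $(x_a,v_a)$ defined by \eqref{xva}, and then to sum over the $N$ atoms and continue in time, reading off uniqueness from Theorem~\ref{jpsmain}. The two structural inputs I would lean on are the separation $\mathrm{dist}(\mathrm{supp}\,f^D(t),\mathrm{supp}\,f^C(t))>r_0/8$ provided by \eqref{sepa} (which makes $\psi(|x-y|)$ a bounded Lipschitz function of $(x,y)$ whenever $x\in\mathrm{supp}\,f^D(t)$ and $y\in\mathrm{supp}\,f^C(t)$, since $\psi$ is bounded and Lipschitz on $[r_0/8,\infty)$), and Lemma~\ref{excon}, which together with \eqref{xva} keeps $(x_a(t),v_a(t))$ in a small neighbourhood of $(x_{0,i},v_{0,i})$, disjoint from $\mathrm{supp}\,f^C(t)$ on $[0,T^*]$; in particular the right-hand side of the second line of \eqref{xva} is bounded and Lipschitz in $(x_a,v_a)$, so $(x_a,v_a)$ exists and is unique and smooth on $[0,T^*]$.

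Next I would introduce $e_x(t):=\int_{\r^{2d}}|x-x_a(t)|^2\,df^D$, $e_v(t):=\int_{\r^{2d}}|v-v_a(t)|^2\,df^D$ and $E:=e_x+e_v$. Since $|x-x_a(t)|^2$ and $|v-v_a(t)|^2$ are $C^1$ with affine (hence bounded, Lipschitz on the compact support) gradients, after multiplication by a spatial cut-off equal to $1$ on $\mathrm{supp}\,f^D$ they are admissible test functions for the weak form of \eqref{weakfd} in the version \eqref{ugh}, and the resulting time functions are absolutely continuous because $t\mapsto\int g\,df^D(t)$ is $W^{1,p}$ for $g\in C^1_b$. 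Testing with the cut-off $\eta$ gives $\int df^D(t)\equiv m_i$ (no mass is exchanged, by the separation), and testing with the two quadratics, together with $\dot x_a=v_a$, yields for a.e.\ $t$
\[
\frac{d}{dt}e_x=2\!\int (x-x_a)\!\cdot\!(v-v_a)\,df^D,\qquad
\frac{d}{dt}e_v=2\!\int (v-v_a)\!\cdot\!\big[F(f^C)-\dot v_a\big]df^D+2\!\int (v-v_a)\!\cdot\!F(f^D)\,df^D .
\]
The self-interaction term is non-positive: since $\psi(|x-y|)\,|w-v|\in L^1(f^D\otimes f^D)$ (point~4 of Definition~\ref{weakdef} applied to $f^D\le f$), relabelling $(x,v)\leftrightarrow(y,w)$ gives $\int (v-v_a)\cdot F(f^D)\,df^D=-\tfrac12\iint\psi(|x-y|)\,|v-w|^2\,df^D\,df^D\le 0$. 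For the external term, writing $\dot v_a=\int\psi(|x_a-y|)(w-v_a)\,df^C$ and splitting, one obtains on $\mathrm{supp}\,f^D(t)$ the bound $|F(f^C)(x,v,t)-\dot v_a(t)|\le C(|x-x_a|+|v-v_a|)$ with $C$ depending only on $r_0$ and the support radius. Inserting these bounds and using Cauchy--Schwarz, $\frac{d}{dt}E\le C'E$ on $[0,T^*]$, and $E(0)=0$ because $f_0^D=m_i\delta_{x_{0,i}}\otimes\delta_{v_{0,i}}=m_i\delta_{(x_a(0),v_a(0))}$. Grönwall forces $E\equiv 0$, hence $f^D(t)$ is concentrated at $(x_a(t),v_a(t))$ and, carrying mass $m_i$, equals $m_i\delta_{x_a(t)}\otimes\delta_{v_a(t)}$, i.e.\ \eqref{fd} holds.

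Carrying out the above for each $i=1,\dots,N$ on the smallest of the corresponding intervals $[0,T^*]$ shows $f(t)=\sum_{i=1}^N m_i\delta_{x_i(t)}\otimes\delta_{v_i(t)}$ there, with $(x_i,v_i)$ the curve \eqref{xva} attached to the $i$th atom; since $f^C$ for that atom is then $\sum_{j\ne i}m_j\delta_{x_j}\otimes\delta_{v_j}$, \eqref{xva} becomes exactly the Cucker--Smale ODE system \eqref{cs} (the self-term carrying a factor $v_i-v_i=0$). By Theorem~\ref{jpsmain} that system has a unique $W^{1,1}$ solution, so $f$ is uniquely determined on $[0,T^*]$ and is of the form \eqref{remeq}. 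To globalise, let $\mathcal T$ be the supremum of those $\tau\le T$ for which $f$ restricted to $[0,\tau]$ coincides with the atomic solution built from \eqref{cs}; if $\mathcal T<T$ then, as $f\in C_{weak}(0,T;\mathcal M)$ and the ODE trajectories extend continuously up to $\mathcal T$ (colliding particles being merged, cf.\ Remark~\ref{N-N}), $f(\mathcal T)$ is again of the form \eqref{discini}, and re-running the argument from $\mathcal T$ pushes the atomic structure and uniqueness strictly past $\mathcal T$, contradicting maximality. Hence $\mathcal T=T$, which proves the proposition.

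The conceptual core — and the step I expect to be the main obstacle, given that characteristics are not well defined for \eqref{cscont} — is closing the estimate at the level of the moments $e_x,e_v$: one must see that the singular self-interaction contributes only the dissipative alignment term through the antisymmetry of $\psi(|x-y|)(w-v)$, while $F(f^C)$ is Lipschitz on the region of interest and produces an error controlled by $E$ itself. The accompanying technical care concerns admissibility of the quadratic (non-compactly supported) test functions, the time-regularity needed to run Grönwall, and the merging of colliding particles in the continuation argument, which relies on the uniqueness assertion of Theorem~\ref{jpsmain}.
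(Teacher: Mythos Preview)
Your argument has a genuine gap at the point you yourself flag as ``the main obstacle'': the claim that the separation \eqref{sepa} makes $\psi(|x-y|)$ a bounded Lipschitz function of $(x,y)$ when $x\in\mathrm{supp}\,f^D(t)$ and $y\in\mathrm{supp}\,f^C(t)$. Inequality \eqref{sepa} is a separation in phase space $\r^{2d}$, not in position space: it only says $|x-y|^2+|v-w|^2>(r_0/8)^2$. Nothing prevents two atoms of $f_0$ from sharing the same initial position with different velocities, i.e.\ $x_{0,i}=x_{0,j}$ and $v_{0,i}\neq v_{0,j}$. In that case $|x-y|$ can vanish (at $t=0$) and be arbitrarily small for small $t>0$, so $\psi(|x-y|)$ is neither bounded nor Lipschitz with a constant depending only on $r_0$. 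Consequently your estimate $|F(f^C)(x,v,t)-\dot v_a(t)|\le C(|x-x_a|+|v-v_a|)$ fails, and with it the plain Gr\"onwall on $E=e_x+e_v$. The same issue already invalidates the statement that the right-hand side of \eqref{xva} is bounded and Lipschitz in $(x_a,v_a)$.

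This is precisely the case the paper isolates as \eqref{nieok}. The remedy is not cosmetic: one uses Lemma~\ref{excon} to show that atoms with coincident initial position but different initial velocities separate in position linearly in $t$ (they travel in disjoint cones), which gives only the time-singular Lipschitz bound $|\psi(|x_a-y|)-\psi(|x-y|)|\le Ct^{-1-\alpha}|x-x_a|$. To absorb this singularity one must replace $e_x$ by the weighted quantity $t^{-1}\int|x-x_a|^2\,df^D$, test \eqref{weakfd} with $t^{-1}|x-x_a|^2$ to produce the good extra term $t^{-2}\int|x-x_a|^2\,df^D$, and then run Gr\"onwall with a coefficient $A(t)\sim t^{-\frac12-\alpha}$. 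Integrability of $A$ near $t=0$ is exactly where the restriction $\alpha<\tfrac12$ enters the uniqueness proof; your argument, as written, never uses it, which is a sign that the difficult configuration has been overlooked. Once this weighted scheme closes, the remainder of your outline (summing over atoms, identifying \eqref{xva} with \eqref{cs}, and continuing past sticking times via Remark~\ref{N-N} and Theorem~\ref{jpsmain}) matches the paper.
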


\begin{proof} We show separately for each of atoms that each initial particle generates a mono-atomic solution (at least locally in time). Finiteness of  number of atoms allows to conclude
that the whole solutions is atomic. Hence, we study (\ref{weakfd}) with a mono-atomic initial data located in $(x_0,v_0)$.

We test (\ref{weakfd}) by $(v-v_a(t))^2$ getting
\begin{align}\label{duzoi}
&\frac{d}{dt}\int_{\r^{2d}}f^D(v-v_a(t))^2dxdv = -2\int_{\r^{2d}}f^D(v-v_a(t)) \dot{v}_a(t)dxdv\nonumber\\
&\quad\quad+2\int_{\r^{2d}}F(f^C)f^D(v-v_a(t)) dxdv +2 \int_{\r^{2d}}F(f^D)f^D(v-v_a(t)) dxdv\nonumber \\
&\quad\quad= -2I + 2II + 2III.
\end{align}
First we  deal with $III$. By symmetry of $f^D\otimes f^D$ with respect to $(x,v)$ and $(y,w)$, we have
\begin{align*}
III&=\int_{\r^{4d}}\psi(|x-y|)(w-v)f^Df^D(v-v_a(t)) dxdvdydw \\
&=\int_{\r^{4d}}\psi(|x-y|)(v-w)f^Df^D(w-v_a(t)) dxdvdydw\\ &=
\frac{1}{2}\int_{\r^{4d}}\psi(|x-y|)(w-v)f^Df^D(v-w) dxdvdydw\\
&=-\frac{1}{2}\int_{\r^{4d}}\psi(|x-y|)(w-v)^2f^Df^Ddxdvdydw\leq 0.
\end{align*}
Next let us take a closer look at $II$. By the definition of $F(f^C)$ 
\begin{align*}
II&=\int_{\r^{4d}}\psi(|x-y|)(w-v)f^Df^C(v-v_a(t))dxdvdydw\\
&=\int_{\r^{4d}}\psi(|x-y|)(w-v_a(t)+v_a(t)-v)f^Df^C(v-v_a(t))dxdvdydw\\
&=\int_{\r^{4d}}\psi(|x-y|)(w-v_a(t))f^Df^C(v-v_a(t))dxdvdydw\\
&\underbrace{-\int_{\r^{4d}}\psi(|x-y|)f^Df^C(v-v_a(t))^2dxdvdydw}_{\leq 0}\\
&\leq \int_{\r^{4d}}\psi(|x-y|)(w-v_a(t))f^Df^C(v-v_a(t))dxdvdydw =: II_2.
\end{align*}
Now we compare $II_2$ with $I$:
\begin{align}\label{ii2}
\left|II_2-I\right| &= \left|\int_{\r^{4d}}(\psi(|x_a(t)-y|)-\psi(|x-y|))(w-v_a(t))f^Df^C(v-v_a(t))dxdvdydw\right|\nonumber\\
&\leq\int_{\r^{4d}}\big|\psi(|x_a(t)-y|)-\psi(|x-y|)\big||w-v_a(t)|f^Df^C|v-v_a(t)|dxdvdydw.
\end{align}
The main problem with estimating the right-hand side of the above inequality lays in the estimation of
\begin{align*}
\big|\psi(|x_a(t)-y|)-\psi(|x-y|)\big|.
\end{align*}
This is the place where the separation of  supports explained by Lemma \ref{excon} comes into play. Both $(x_a(t),v_a(t))$ and $(x,v)$ are in the support of $f^D$, while $(y,w)$ is in the support 
of $f^C$. Thus (\ref{sepa}) implies that either

\begin{align}\label{ok}
|x-y|>\frac{r_0}{8}\ \ \ {\rm and}\ \ \ |x_a(t)-y|>\frac{r_0}{8}
\end{align}
or
\begin{align}\label{nieok}
|v-w|>\frac{r_0}{8}\ \ \ {\rm and}\ \ \ |v_a(t)-w|>\frac{r_0}{8}.
\end{align}
We handle the above two cases separately. 

In case (\ref{ok}) it is clear that
\begin{align}\label{okes}
|\psi(|x_a(t)-y|)-\psi(|x-y|)|\leq L|x-x_a(t)| = Lt^\frac{1}{2}\frac{|x-x_a(t)|}{t^\frac{1}{2}}.
\end{align}
for some constant $L=L(r_0)>0$, since $\psi$ is smooth outside of any neighborhood of $0$. 

In case of (\ref{nieok}) we are actually in a situation when at $t=0$ multiple particles are situated in the same spot with different velocities i.e. $f^C$ is divided into two parts
 $f^{C_1}$ and $f^{C_2}$. The first part submits to the same bounds as (\ref{ok}) while for the second, $f^{C_2}$, we have
\begin{align*}
f^{C_2}(0)=\sum_{j}m_j\delta_{x_{0,i}}\otimes\delta_{v_{0,j}} =:\sum_jf^{C_2}_j(0).
\end{align*}
Thus, initially $f^{C_2}$ is concentrated in the same position as $f^D$ but with different velocities. In this case we  apply Lemma \ref{excon} multiple times 
(once for $f^D$ and multiple times for each $f^{C_2}_j$). Even though Lemma \ref{excon} is written for solutions of (\ref{weak}) we may still apply it for $f^D$ and 
each of $f^{C_2}_j$, since the proof does not involve directly the dependence on $v$. 
Therefore, by Lemma \ref{excon}, we have
\begin{align*}
{\rm supp}f^D(t)\subset (x_{0,i},v_{0,i}) + tB_x(v_{0,i},\epsilon)
\mbox{ \ \ and \ \ } 
{\rm  supp}f^{C_2}_j(t)\subset (x_{0,i},v_{0,j}) + tB_x(v_{0,j},\epsilon).
\end{align*}
At this point we fix $R>0$ and $T^*$ from Lemma \ref{excon}, so that $\epsilon$ is small enough that
\begin{align*}
B_x(v_{0,i},\epsilon)\cap  B_x(v_{0,j},\epsilon) = \emptyset \mbox{ \ for \ } i \neq j.
\end{align*}
Moreover
\begin{align*}
{\rm dist}(B_x(v_{0,i},\epsilon), B_x(v_{0,j},\epsilon))>C(r_0)>0.
\end{align*}
Again, we used that the number of all atoms is finite.
If so, then also
\begin{align*}
|x-y|>tC(R)\ \ \ {\rm and}\ \ \ |x_a(t)-y|>tC(R)
\end{align*}
for $x\in{\rm supp}f^D$ and $y\in{\rm supp}f^{C_2}$. Therefore in such case ($\psi(|s|)=s^{-\alpha}$ and $\psi'(|s|) \sim s^{-1-\alpha}$)
\begin{align}\label{nieokes}
\big|\psi(|x_a(t)-y|)-\psi(|x-y|)\big| \leq C(R)t^{-1-\alpha}|x-x_a(t)| = C(R)t^{-\frac{1}{2}-\alpha}\frac{|x-x_a(t)|}{t^\frac{1}{2}}.
\end{align}
We combine inequalities (\ref{ii2}), (\ref{okes})\footnote{Here is the entire estimation in case (\ref{ok}) and the estimation of $f^{C_1}$ in case (\ref{nieok}).} and (\ref{nieokes}) with 
the global bounds on the support of $f$ obtaining
\begin{align*}
\left|II_2-I\right|\leq A(t)\int_{\r^{2d}}t^{-\frac{1}{2}}|x-x_a(t)|\,|v-v_a(t)|f^Ddxdv
\end{align*}
for $A:=Lt^\frac{1}{2}+C(R)t^{-\frac{1}{2}-\alpha}$, which thanks to the fact that $\alpha<\frac{1}{2}$ is integrable with respect to $t$ over $[0,T^*]$. 
Taking into the account our estimations of $I$, $II$ and $III$ we come back to (\ref{duzoi}) and claim that
\begin{align}\label{fzv}
\frac{d}{dt}\int_{\r^{2d}}f^D|v-v_a(t)|^2dxdv&\leq A(t)\int_{\r^{2d}}t^{-\frac{1}{2}}|x-x_a(t)|\,|v-v_a(t)|f^Ddxdv\\
&\leq A(t)\left(\int_{\r^{2d}}f^Dt^{-1}|x-x_a(t)|^2dxdv + f^D|v-v_a(t)|^2\right).\nonumber
\end{align}
To finish the proof there is a  need to estimate the first integrand on the right-hand side of (\ref{fzv}). We test\footnote{Even though $|x-x_a(t)|^2t^{-1}$ is not a good test
 function for (\ref{weakfd}), we can approximate the singularity at $t=0$ by modification $(t+l)^{-1}$ and then let $l\to 0$.} (\ref{weakfd}) with $|x-x_a(t)|^2t^{-1}$ getting
\begin{align*}
\frac{d}{dt}\int_{\r^{2d}}t^{-1}f^D|x-x_a(t)|^2dxdv &+ \int_{\r^{2d}}t^{-2}f^D|x-x_a(t)|^2dxdv\\
&\leq 2\int_{\r^{2d}}t^{-1}f^D(x-x_a(t))\dot{x}_a(t)dxdv\\
&\quad\quad - 2\int_{\r^{2d}}t^{-1}f^D(x-x_a(t))vdxdv.
\end{align*}
and apply Young's inequality with $\delta >0$ to obtain
\begin{align}\label{fzx}
\frac{d}{dt}\int_{\r^{2d}}t^{-1}f^D|x-x_a(t)|^2dxdv &+ \int_{\r^{2d}}t^{-2}f^D|x-x_a(t)|^2dxdv\nonumber\\
&\leq 2\int_{\r^{2d}}t^{-1}f^D|x-x_a(t)||v-v_a(t)|dxdv\nonumber\\
&\leq \delta \int_{\r^{2d}}t^{-2}f^D|x-x_a(t)|^2dxdv\\
&\quad\quad+ C\int_{\r^{2d}}f^D|v-v_a(t)|^2dxdv.
\end{align}

Finally we fix a suitable $\delta>0$ and combine inequalities (\ref{fzv}) and (\ref{fzx}), which leaves with
\begin{multline*}
\frac{d}{dt}\left(\int_{\r^{2d}}(t^{-1}f^D|x-x_a(t)|+f^D|v-v_a(t)|^2)dxdv\right)+ \frac{1}{2}\int_{\r^{2d}}t^{-2}f^D|x-x_a(t)|^2dxdv\\
\leq A(t)\int_{\r^2d} (t^{-1}f^D|x-x_a(t)|^2+f^D|v-v_a(t)|^2) dxdv,
\end{multline*}
which by Gronwall's lemma and the fact that $A(t) \sim t^{-1/2-\alpha}$  is integrable in a neighborhood of $t=0$ (restriction $\alpha \in (0,\frac{1}{2})$ is used here again) implies
\begin{align*}
\int_{\r^{2d}}(t^{-1}f^D|x-x_a(t)|^2+f^D|v-v_a(t)|^2)dxdv\equiv 0 \mbox{ \ on \ } [0,T^*].
\end{align*}
 Thus on $[0,T^*]$ we have $x\equiv x_a$ and $v\equiv v_a$ on the support of $f^D$, which is exactly equivalent to (\ref{fd}). 

We have proved $f^D$ is mono-atomic. Then repeating the procedure for all atoms (the number is finite) we conclude that $f$ is atomic
 on a time interval $[0,T^*]$ with possibly smaller, but positive $T^*>0$. This procedure works till the first moment of sticking of an ensemble of particles.

As a final remark we explain the case of the sticking some particles in a finite time, say $T_1$. The above considerations prove uniqueness and atomic structure of the solutions 
for the time interval $(0,T_1)$ without sticking of particles, and we want to reach $T_1$.
The regularity of the weak solution guarantees that $\partial_t f \in L^p(0,T;(C^1(B(R)))^*)$ with $T> T_1$, hence trajectories of atoms are $W^{1,p}$ vector functions and they are uniquely
extended up to $T_1$. Moreover, time regularity and (\ref{ugh}) exclude other possibilities of evolution of the studied measure-valued solution. Thus, we can reinitiate our analysis from time $T_1$ with 
atomic initial state $f(T_1)$, reaching given $T$. Note that the number of moments of sticking is finite, since $N$ is finite (see Remark \ref{N-N}).
 As a conclusion, since the solution exits globally in time it must be atomic on $[0,T]$. Theorem \ref{main3} is proved.
\end{proof}

\appendix
\section{Appendix}\label{app}
\begin{proof}[Proof of Proposition \ref{pop}]
The existence and uniqueness part as well as points $(i)$ and $(ii)$ are no different than in the case of regular weight and we will not prove them here. Their proofs can be found in the 
literature (see for instance \cite{haliu} or \cite{jpe}). Thus it remains to prove  $(iii)$-$(v)$.\\
{$(iii)-(v)$}\\
First, assuming for notational simplicity that $(x^n,v^n,N_n,m_i^n)=(x,v,N,m_i)$ let us prove a particularly useful estimate. Let $1<p<q$ be given numbers satisfying additional conditions 
that will be specified later. For each $n=1,2,...$, velocity $v^n$ (denoted by $v$) is absolutely continuous on $[0,T]$ and thus by $(\ref{cs})_2$, we have

\begin{align}
m_i\int_0^T|\dot{v}_i|^pdt
&=m_i\int_0^T\left|\sum_{j=1}^Nm_j(v_j-v_i)\psi_n(|x_i-x_j|)\right|^pdt\nonumber\\
&\leq \sum_{j=1}^Nm_im_j\int_0^T|v_j-v_i|^p\psi^p_n(|x_i-x_j|)dt\nonumber\\
&= \sum_{j=1}^N\int_0^T\left(m_im_j\right)^\frac{p}{q} |v_j-v_i|^{p\cdot\frac{p}{q}}\psi^p_n(|x_i-x_j|) \cdot \left(m_im_j|v_j-v_i|^{p}\right)^{(1-\frac{p}{q})}dt\nonumber\\
&\leq \sum_{j=1}^Nm_im_j\int_0^T|v_j-v_i|^p\psi_n^q(|x_i-x_j|) dt + \sum_{j=1}^Nm_im_j\int_0^T|v_j-v_i|^pdt\label{qneq1.apA}\\
&\leq \epsilon\sum_{j=1}^Nm_im_j\underbrace{\int_0^T|v_j-v_i|^2\psi_n^\frac{2q}{p}(|x_i-x_j|) dt}_{=:A} + C(\epsilon)Tm_i + \sum_{j=1}^Nm_im_j\int_0^T|v_j-v_i|^pdt\label{vdot.apA}.
\end{align}
Inequality (\ref{qneq1.apA}) is obtained by Young's inequality with exponent $\frac{q}{p}$ while (\ref{vdot.apA}) follows by Young's inequality with exponent $\frac{2}{p}$. In both of the above 
inequalities we also use the assumption that $\sum_{i=1}^Nm_i=1$.  

Furthermore recalling that $\psi_n^\frac{2q}{p}(s)\leq\psi^\frac{2q}{p}(s)=|s|^{-\lambda}$, where $\lambda:=\frac{2q\alpha}{p}$, integral $A$ can be estimated as follows:
\begin{align*}
A\leq \sum_{k=1}^d\int_0^T(v_j^k-v_i^k)\cdot(v_j^k-v_i^k) |x_i^k-x_j^k|^{-\lambda}dt
 =  \sum_{k=1}^d\int_0^T(v_j^k-v_i^k)\cdot\left((x_j^k-x_i^k) |x_i^k-x_j^k|^{-\lambda}\right)^{'}dt \\
= -\frac{1}{1-\lambda}\sum_{k=1}^d\int_0^T(\dot{v}_j^k-\dot{v}_i^k)\cdot (x_j^k-x_i^k) |x_i^k-x_j^k|^{-\lambda}dt
+ \frac{1}{1-\lambda}\sum_{k=1}^d(v_j^k-v_i^k)\cdot (x_j^k-x_i^k) |x_i^k-x_j^k|^{-\lambda}\bigg|^T_0\\
\leq C_\lambda\int_0^T|\dot{v}_i||x_i-x_j|^{1-\lambda}dt
+ C_\lambda\int_0^T|\dot{v}_j||x_i-x_j|^{1-\lambda} dt
+ 2C_\lambda\sup_{t\in[0,T]}|v_j-v_i||x_i-x_j|^{1-\lambda}.
\end{align*}
However, the above estimation is valid only if $\lambda<1$, which means that $\frac{q}{p}\cdot 2\alpha<1$ and such condition can be easily satisfied if $\alpha<\frac{1}{2}$ and $1<p<q$ 
are small enough. By point $(ii)$ we have $|v|\leq {\mathcal R}$ and $|x|\leq {\mathcal R}$. This leads to the concluding estimation of $A$, which reads:
\begin{align}\label{szacA.apA}
A\leq C ({\mathcal R})^{1-\lambda}\int_0^T|\dot{v}_i| dt +
 C({\mathcal R})^{1-\lambda}\int_0^T|\dot{v}_j|dt + C({\mathcal R})^{2-\lambda}.
\end{align}
Now we will apply the above calculation (particularly estimations (\ref{vdot.apA}) and (\ref{szacA.apA})) in the effort to prove $(iii)$ and $(iv)$. For $(iii)$ let us assume that
 $p=q=1$\footnote{Note that (\ref{qneq1.apA}) remains true also for $p=q=1$.}. We sum (\ref{vdot.apA}) over $i=1,...,N$ to get
\begin{align*}
\sum_{i=1}^Nm_i\int_0^T|\dot{v}_i|dt\leq \epsilon\sum_{i,j=1}^Nm_im_j A + C(\epsilon)T + 2{\mathcal R}T
\end{align*}
and plug in (\ref{szacA.apA}) to obtain
\begin{align*}
\sum_{i=1}^Nm_i\int_0^T|\dot{v}_i|dt \leq 2\epsilon C({\mathcal R})^{1-\lambda}\sum_{i=1}^Nm_i\int_0^T|\dot{v}_i|dt + \epsilon C({\mathcal R})^{2-\lambda} + C(\epsilon)T + 2{\mathcal R}T,
\end{align*}
which after fixing sufficiently small $\epsilon$ and rearranging yields
\begin{align}\label{p1.apA}
\sum_{i=1}^Nm_i\int_0^T|\dot{v}_i|dt \leq C({\mathcal R})^{2-\lambda} +CT+C{\mathcal R}T,
\end{align}
which proves $(iii)$ for $p=1$. Then for $1<p=q$ using (\ref{vdot.apA}), (\ref{szacA.apA}) and (\ref{p1.apA}), we have
\begin{align}\label{pneq1.apA}
\sum_{i=1}^Nm_i\int_0^T|\dot{v}_i|^pdt\leq 2C({\mathcal R})^{1-\lambda}\sum_{i=1}^Nm_i\int_0^T|\dot{v}_i|dt + C({\mathcal R})^{2-\lambda} + CT + C{\mathcal R}^p T
\leq C({\mathcal R},p,T,\lambda)
\end{align}
and $(iii)$ is proved for some sufficiently small $p>1$. In order to prove $(iv)$ we take $1=p<q$ in (\ref{vdot.apA}), which leads us to a very similar result to (\ref{pneq1.apA}) and to 
the end of the proof of $(iv)$.\\

Let us prove {$(v)$}. Fix $n=1,2,...$ and a bounded, Lipschitz continuous function $g=g(x,v)$. Then according to Definition \ref{weakdef}, for $t\in[0,T)$, $\epsilon>0$ and
\begin{align*}
\chi_{\epsilon,t}(s):=
\left\{
\begin{array}{ccc}
1& {\rm for} & 0\leq s\leq t-\epsilon\\
-\frac{1}{2\epsilon}(s-t-\epsilon) & {\rm for} & t-\epsilon<s\leq t+\epsilon\\
0 & {\rm for} & t_\epsilon<s
\end{array}
\right.
\end{align*}
the function $\phi(s,x,v):=\chi_{\epsilon,t}(s)g(x,v)\in{\mathcal G}$ is a good test function in the weak formulation for each $f_n$. Thus we plug $\phi$ into (\ref{weak}) obtaining
\begin{align*}
\frac{1}{2\epsilon}\int_{t-\epsilon}^{t+\epsilon}\int_{\r^{2d}}f_ngdxdvdt = \\
=-\int_0^T\int_{\r^{2d}}f_n\chi_{\epsilon,t}v\nabla gdxdvdt - \int_0^T\int_{\r^{2d}}F_n(f_n)f_n\chi_{\epsilon,t}\nabla_v
 g dxdvdt  -\int_{\r^{2d}}f_{0}gdxdv.
\end{align*}
Since $t\mapsto\int_{\r^{2d}}f_ngdxdv$, $t\mapsto \int_{\r^{2d}}f_n\chi_{\epsilon,t}v\nabla gdxdv$ and $t\mapsto \int_{\r^{2d}}F_n(f_n)f_n\chi_{\epsilon,t}\nabla_v g dxdv$ are 
integrable functions (for fixed $n$ and $g$), then converging with $\epsilon\to 0$ leads to the following equation holding for a.a $t\in[0,T)$:
\begin{align*}
\int_{\r^{2d}}f_n(t)gdxdvdt &= \int_0^t\int_{\r^{2d}}f_nv\nabla gdxdvdt + \int_0^t\int_{\r^{2d}}F_n(f_n)f_n\nabla_v g dxdvdt - \int_{\r^{2d}}f_{0}gdxdv\\
 &= \int_0^t G(t)dt - \int_{\r^{2d}}f_{0}gdxdv,
\end{align*}
where
\begin{align*}
G(t)&:=\int_{\r^{2d}}f_n(t)v\nabla gdxdv + \int_{\r^{2d}}F_n(f_n)(t)f_n(t)\nabla_v g dxdv\\
&=\sum_{i=1}^Nm_iv_i^n(t)\nabla g(x_i^n(t),v^n_i(t)) + \sum_{i,j=1}^Nm_im_j(v_j^n(t)- v_i^n(t))\psi(|x^n_i(t)-x^n_j(t)|)\nabla_vg(x^n_i(t),v^n_i(t)).
\end{align*}
By virtue of points $(ii)$ and $(iii)$ of this proposition, we have
\begin{align*}
\int_0^T|G(t)|^pdt &\leq \int_0^T\left|\sum_{i=1}^{N}m_iv^n_i(t)(\nabla g)(x^n_i(t),v^n_i(t))\right|^pdt \\
&+ \int_0^T\left|\sum_{i,j=1}^{N}m_im_j\psi_n(|x_i^n(t)-x_j^n(t)|)(v^n_j(t)-v^n_i(t))(\nabla_v g)(x_i^n(t),v_i^n(t))\right|^pdt\\
&\leq Lip(g)^pT({\mathcal R})^p + Lip(g)^pM({\mathcal R}) =: M_g(Lip(g),{\mathcal R})
\end{align*}
which finishes the proof of $(v)$.
\end{proof}

Now we aim at giving a sketch of the proof of uniqueness to the particle system (\ref{cs}). Note that point (ii) of Proposition \ref{pop} implies that solutions to 
ODEs (\ref{cs}) are of $W^{2,p}$ regularity for
some $p>1$.

\begin{proof}[Proof of the uniqueness part of Theorem \ref{jpsmain}]

Consider two solutions to the system (\ref{cs}) with the same initial data, name them $(x_i,v_i)$  and $(\bar x_i,\bar v_i)$. They fulfill the following systems
\begin{equation}
\begin{array}{ll}
\displaystyle \frac{d}{dt}x_i=v_i, & \displaystyle \frac{d}{dt}\bar x_i=\bar v_i,\\[7pt]
\displaystyle \frac{d}{dt}v_i=\sum_{j} m_j(v_j-v_i)\psi(|x_i-x_j|), & \displaystyle \frac{d}{dt}\bar v_i=\sum_{j} m_j(\bar v_j-\bar v_i)\psi(|\bar x_i-\bar x_j|).
\end{array}
\end{equation}
Putting
\begin{equation}
\delta x_i=x_i-\bar x_i, \qquad \delta v_i = v_i -\bar v_i,
\end{equation}
we find
\begin{equation}\label{xx2}
\begin{array}{l}
\displaystyle \frac{d}{dt}\delta x_i=\delta v_i, \\[7pt]
\displaystyle \frac{d}{dt}\delta v_i=\sum_{j} m_j(\delta v_j-\delta v_i)\psi(|\bar x_i-\bar x_j|) - \sum_{j} m_j( v_j- v_i)(\psi(|\bar x_i-\bar x_j|)-\psi(|x_i-x_j|)).
\end{array}
\end{equation}

Now we use fine properties of solutions to (\ref{cs}). The solutions to (\ref{cs}) are atomic, so in particular they fulfill Definition \ref{weakdef}. The most important features
concern propagation of the support. In the language of the ODE solutions, they control the change of trajectories $x_i(t)$ and $v_i(t)$. 
In particular we have
$$
|\bar x_i (t)t- \bar x_j(t)| \geq \min\{C,Ct\}.
$$
There are two options: $x_{0,i}\neq x_{0,j}$, then the difference is bounded from below by a constant (on a short time interval), or 
$x_{0,i}=x_{0,j}$ and $v_{0,i}\neq v_{0,j}$. Then particles travel in different cones and by Lemmas (\ref{excon}) and (\ref{conelem})  we find that the difference is bounded by $t$.
Hence at least for a short time 
$$
\psi(|\bar x_i(t)-\bar x_j(t)|) \leq C+Ct^{-\alpha}
$$
Next we examine
$\psi(|\bar x_i-\bar x_j|)-\psi(|x_i-x_j|)$.
Here we have again two  cases. The first one holds as $x_{0,i}\neq x_{0,j}$. Then for short time interval 
$$
\big| \psi(|\bar x_i(t)-\bar x_j(t)|)-\psi(|x_i-x_j|) \big| \leq C( |\delta x_i(t)| + |\delta x_j(t)|).
$$
In the second case $x_{0,i}=x_{0,j}$ and $v_{0,i}\neq v_{0,j}$. Then by Lemma \ref{excon}
$$
{\rm supp \;} x_i(t),\bar x_i(t) \in x_{0,i} + tB(v_{0,i},\epsilon) \mbox{ \ and \ } {\rm supp\; } x_j(t),\bar x_j(t) \in x_{0,i} + tB(v_{0,j},\epsilon),
$$
then
$$
|x_i(t)-x_j(t)|,\, |\bar x_i(t)-\bar x_j(t)| \geq Ct
$$
for $t \in [0,T_*]$. Hence ($\psi'(|s|) \sim s^{-1-\alpha}$)
 $$
\big| \psi(|\bar x_i(t)-\bar x_j(t)|)-\psi(|x_i-x_j)| \big| \leq Ct^{-1-\alpha}( |\delta x_i(t)| + |\delta x_j(t)|).
$$
Next, trivially from (\ref{xx2}) we find
$$
\sup_{\tau \leq t}|\delta x_i(\tau)| \leq t\sup_{\tau \leq t} |\delta v(\tau)|.
$$
Hence we obtain
$$
|\frac{d}{dt} \delta v_i(t)| \leq \sum_j m_j  C t^{-\alpha} (\sup_{\tau \leq t}|\delta v_i(\tau)|+\sup_{\tau \leq t}|\delta v_j(\tau)|)
+\sum_j m_j Ct^{-1-\alpha} (\sup_{\tau \leq t}|\delta x_i(\tau)| +\sup_{\tau \leq t}|\delta x_j(\tau)|).
$$
And finally 
$$
\sum_i m_i\sup_{t\leq T} | \delta v_i(t)|
  \leq C \int_0^T (t^{-\alpha} + t^{-1-\alpha} t) dt \,  \left(\sum_i m_i \sup_{\tau \leq t}|\delta v_i(\tau)|  \right).
$$
Taking  $T\leq T_*$ so small that $C \int_0^T (t^{-\alpha} + t^{-1-\alpha} t) < \frac12$ we obtain
$$
\sum_i m_i\sup_{t\leq T} | \delta v_i(t)| \equiv 0.
$$
The solutions are unique.
The above procedure works till the time of the first sticking of a group of particles, say at time $T_1$. Then the regularity in time ($v\in W^{1,p}(0,T)$) implies that all solutions 
are uniquely extended till time $T_1$. And then we can restart our procedure with 
initial configuration $x_i(T_1),v_i(T_1)$, taking into account that sticking particles create a new one with an appropriate mass.

\end{proof}

{\bf Acknowledgements}
We would like to express our gratitude to Jos\' e A. Carrillo and Piotr Gwiazda for numerous helpful remarks.

\bibliographystyle{spmpsci}
\bibliography{CS-fin-rev}
\end{document}